\def\myfont{\fontfamily{cmss}\selectfont}
\numberwithin{equation}{section}
\newtheorem{thm}{Theorem}[section]
\newtheorem{lemma}[thm]{Lemma}
\newtheorem{proposition}[thm]{Proposition}
\newtheorem{definition}[thm]{Definition}
\newtheorem{example}[thm]{Example}
\newtheorem{remark}[thm]{Remark}
\begin{document}
 \title{Constructing equilibrium states for some partially hyperbolic attractors via  densities}
\author{David Parmenter and Mark Pollicott
\thanks{
The second author is partly
supported by ERC-Advanced Grant 833802-Resonances and EPSRC grant
EP/T001674/1}
}
\maketitle
 \myfont

\abstract{We shall describe a new construction of equilibrium states for a class of partially hyperbolic systems. This generalises our construction for Gibbs measures in the uniformly hyperbolic setting. This more general setting introduces new issues that we need to address carefully, in particular requiring additional assumptions on the transformation. We treat two cases: either the centre-stable manifold satisfies a bounded expansion condition considered in \cite{CPZ-2};  or the centre-unstable manifold satisfies a subexponential contraction condition  which appears new in the context of equilibrium state constructions. The problem of constructing equilibrium states was previously raised by Pesin-Sinai \cite{ps}  and Dolgopyat \cite{Dogo} for the particular case of u-Gibbs measures, and by Climenhaga, Pesin and Zelerowicz  \cite{CPZ-2} for other equilibrium states.}

\section{Introduction}

In this article we will describe a new construction of equilibrium states for certain partially hyperbolic diffeomorphisms. In particular, we will extend the ideas in \cite{PaPo} to the setting of appropriate partially hyperbolic diffeomorphisms. 

Given a continuous map on a compact metric space $f: X \to X$ and a continuous function $G: \textcolor{black}{X} \to \mathbb R$ we say that a $f$-invariant probability measure  $\mu_G$ is an equilibrium state for $G$ if 
$$h(\mu_G) + \int G d\mu_G = \sup\left\{h(\mu) + \int G d\mu \hbox{ : } \mu \in \mathcal M_f(X) \right\} \eqno(1.1)$$
where $\mathcal M_f(X)$ denotes the space of $f$-invariant probability measures
i.e. $\mu_G$ is a measure which maximizes the sum of the entropy $h(\mu)$ and the integral of $G$,  over all $f$-invariant measures $\mu$ \cite{ruelle}, \cite{walters}.
The value attained by  the supremum in (1.1)  is called the pressure and denoted $P(G)$.

In the uniformly hyperbolic setting if $G$ is (H\"older) continuous then  there exists a (unique) equilibrium state by virtue of expansiveness and there are a variety of different constructions of such measures.  A particularly well known  example is the SRB-measure
which induces absolutely continuous measure with respect to the induced volume on unstable manifolds.
In the partially hyperbolic setting the constructions are more delicate.
Fundamental work due to Pesin and Sinai \cite{ps} extends the notion of an SRB measure for uniformly hyperbolic attractors to u-Gibbs measures in the partially hyperbolic setting. 
Intuitively u-Gibbs measures are defined to be absolutely continuous with respect to the induced volume on unstable manifolds 
(cf. \cite{Dogo}). In \cite{ps} the authors start with the normalised induced volume on a piece of local unstable manifold then taking averaged pullbacks, the limiting measures are $f$-invariant u-Gibbs measures.



In this article we consider a construction of equilibrium states for partially hyperbolic diffeomorphisms $f: X \to X$ by using  sequences of  measures supported on a piece of unstable manifold $W_\delta^u(x)$ that are absolutely continuous with respect to the induced volume. In \cite{PaPo} we replaced the induced measure on unstable leaves for hyperbolic attractors, denoted $\lambda$, by the integral with respect to $\lambda$ over $W_\delta^u(x)$ of specially chosen density functions. In this article we will apply similar methods as those in \cite{PaPo} to the setting of partially hyperbolic diffeomorphisms (although we require additional conditions that will be described below). Let $\Phi : X \rightarrow \mathbb{R}$ be the geometric potential defined by $\Phi(x) = - \log | $det$(Df|E_x^u)|,$ where $E_x^u$ is the unstable bundle for the diffeomorphism. This is usually called the unstable expansion coefficient. 

\begin{thm}\label{thm:gibbsdiffeo_intro}
Let $f: X \to X$ be a $C^{1+\alpha}$ topologically mixing partially hyperbolic attracting diffeomorphism satisfying Lyapunov stability (Definition \ref{LS}) and  let  $G: X \to \mathbb R$ be a continuous function.
Given $x\in X$ and $\delta > 0$ 
consider 
the  sequence of probability measures $(\lambda_n)_{n=1}^\infty$ 
supported on $W^u_\delta(x)$ 
and absolutely continuous with respect to the induced volume $\lambda = \lambda_{W^u_\delta(x)}$
with densities
$$
\frac{d\lambda_n}{d\lambda}(y) :=  
\frac{
 \exp\left( \sum_{i=0}^{n-1} (G - \Phi)(f^iy)) \right) 
 }{
 \int_{W_\delta^u(x)}
 \exp\left( \sum_{i=0}^{n-1} (G-\Phi)(f^iz) \right) 
 d\lambda(z)
 }
\quad  \hbox{ for } y \in  W^u_\delta(x).\eqno(1.2)
$$
Then the weak* limit points of the averages  
$$\mu_n :=\frac{1}{n}\sum_{k=0}^{n-1} f_*^k \lambda_n, \quad n \geq 1,
\eqno(1.3)
\label{mu_n}
$$
(where $f_* \lambda_n(A) = \lambda_n(f^{-1}A)$ for Borel sets $A \subset X$) are equilibrium states for $G$.
\end{thm}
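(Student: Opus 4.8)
The plan is to follow the strategy of \cite{PaPo}, establishing two inequalities separately. Let $\mu$ be any weak\,$^*$-limit point of the averages $\mu_n$, say $\mu_{n_j}\to\mu$; such limit points exist by compactness of $\mathcal M_f(X)$, and $\mu$ is automatically $f$-invariant because of the Cesàro averaging in (1.3). We must show $h(\mu)+\int G\,d\mu = P(G)$, and this breaks into (a) the lower bound $h(\mu)+\int G\,d\mu \geq P(G)$, and (b) the upper bound (the variational principle), which gives $h(\nu)+\int G\,d\nu \leq P(G)$ for \emph{every} $f$-invariant $\nu$, hence in particular the reverse inequality once we know $P(G)$ is the relevant pressure. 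For (b) one simply invokes the variational principle for topological pressure on the compact metric space $X$, so the real content is (a).

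\smallskip

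\noindent\textbf{Lower bound via the densities.} The key observation is that the normalizing integral in the denominator of (1.2),
$$
Z_n := \int_{W_\delta^u(x)} \exp\!\left(\sum_{i=0}^{n-1}(G-\Phi)(f^iz)\right)d\lambda(z),
$$
grows like $e^{nP(G)}$ to subexponential order. The upper estimate $\limsup_n \tfrac1n\log Z_n \leq P(G)$ follows from a covering argument: decompose $W_\delta^u(x)$ into $(n,\epsilon)$-Bowen balls, use the bounded distortion / absolute continuity properties of the unstable holonomies together with the definition of $\Phi$ as the unstable Jacobian (so that $\sum_{i=0}^{n-1}\Phi(f^iz)$ measures the contraction of $f^{-n}$ on unstable pieces), and compare with separated-set sums defining $P(G)$. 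For the matching lower bound $\liminf_n\tfrac1n\log Z_n \geq P(G)$ one uses topological mixing together with the Lyapunov stability hypothesis (Definition \ref{LS}): these let us push a single unstable disc around to shadow arbitrary orbit segments, so the integral over $W_\delta^u(x)$ sees, up to bounded multiplicative error, the full $(n,\epsilon)$-separated-set sum. Once $\tfrac1n\log Z_n\to P(G)$ is in hand, one runs the standard entropy-lower-bound argument: partition $W_\delta^u(x)$ according to $(n,\epsilon)$-dynamical behaviour, estimate $-\int \log(\text{local factor of }\mu_n)$ from below using the explicit form of $d\lambda_n/d\lambda$, invoke the Misiurewicz-style argument that passes from $\tfrac1n H_{\mu_n}(\bigvee_0^{n-1}f^{-i}\mathcal P)$-type quantities to $h(\mu)$ in the limit along $n_j$, and use that $\int(G-\Phi)\,d\mu_n$ plus the growth rate of $Z_n$ reconstructs $h(\mu)+\int G\,d\mu$. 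The role of $\Phi$ is precisely that it cancels the Jacobian distortion coming from the induced volume $\lambda$ under pullback, so that the limit produces the entropy term rather than a conditional-entropy-on-unstables term.

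\smallskip

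\noindent\textbf{Main obstacle.} The hardest part is controlling the geometry of the unstable leaves under the partial hyperbolicity: unlike the uniformly hyperbolic case of \cite{PaPo}, the complementary (centre) directions are not uniformly contracted, so the distortion estimates for $\sum_{i=0}^{n-1}\Phi(f^i\cdot)$ along $W_\delta^u(x)$ and the comparison of $\lambda_n$-mass with Bowen-ball mass are no longer automatic. This is exactly where the extra hypothesis enters — Lyapunov stability is used to guarantee that pushed-forward unstable discs do not degenerate and that the relevant densities remain comparable on dynamical balls — and verifying the uniform (in $n$) distortion bounds under this weaker assumption, so that the covering and separated-set comparisons still go through, is the technical heart of the proof. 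A secondary subtlety is the interchange of limits (in $n$ for the densities versus the Cesàro average and versus the weak\,$^*$ limit), handled by the usual Misiurewicz concavity/continuity trick for entropy, which requires some care because $h(\cdot)$ is only upper semicontinuous.
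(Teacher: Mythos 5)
Your outline matches the paper's strategy at a high level: take a weak\,$^*$ limit of the Ces\`aro averages (automatically $f$-invariant), use the variational principle for the upper bound, characterize $P(G)$ as $\limsup_n\frac1n\log Z_n$ (Proposition~\ref{pressure_intro}), then run a Misiurewicz-type estimate (Lemma~\ref{Misent}) to pass from $H_{\lambda_n}\big(\bigvee_{h<n}f^{-h}\mathcal P\big)$ to $h(\mu)$. However, your identification of the technical heart as ``uniform distortion bounds for $\sum_{i<n}\Phi(f^i\cdot)$ along unstable leaves'' is off. No bounded-distortion argument is used anywhere: the geometric potential enters exactly through the change-of-variables identity (1.4), so the normalizing integral is rewritten as $\int_{f^nW^u_\delta(x)}e^{S_nG(f^{-n}y)}\,d\lambda_{f^nW^u_\delta}(y)$ with the exact Jacobian, and the only regularity input on potentials is the uniform continuity of $G$ used in Lemma~\ref{bound}. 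Likewise, the role of Lyapunov stability is not to keep ``pushed-forward unstable discs from degenerating'' (those expand regardless) nor to keep densities ``comparable on dynamical balls''; it is used to control expansion in the \emph{centre-stable} direction. Concretely, in the spanning-set half of the proof of Proposition~\ref{pressure}, one covers $f^{n+m}W^u_\delta(x)$ by $\epsilon$-balls in the leaf metric and pulls their centres back by $f^{-n}$; Lyapunov stability guarantees that sliding along $W^{cs}_{loc}$ from an arbitrary $z\in X$ to the unstable disc keeps $d(f^jz,f^jy)<\epsilon$ for all $0\le j\le n$, which is exactly what makes the pulled-back centres an $(n,2\epsilon)$-spanning set. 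Once the pressure characterization is in hand, the entropy half (your Misiurewicz step) is essentially routine and matches the inequalities (4.2)--(4.8) in the text.
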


From the change of variables for
 $f^n: W^u_\delta(x) \to f^n(W^u_\delta(x))$, by the chain rule the Jacobian is
$$\left|\det (Df^n|E_y^u)\right|
= \prod_{i=1}^n \left|\det  (Df|E_{f^iy}^u)\right| = 
 \exp\left( -\sum_{i=0}^{n-1}  \Phi(f^iy) \right).
 $$
Thus we can reformulate (1.2) as 
 $$
 \lambda_n(A) = 
 \frac{\int_{f^{\textcolor{black}{n}}(A)} \exp \left( \sum_{i=1}^{n} G(f^{-i}y)\right) d\lambda_{f^nW_\delta^u}(y)}
 {\int_{f^nW_\delta^u(x)} \exp \left( \sum_{i=1}^{n} G(f^{-i}z)\right) d\lambda_{f^nW_\delta^u}(z)}
 \hbox{  for Borel  $A \subset W_\delta^u(x)$}
\eqno(1.4)
 $$ 
 which is often  more convenient  in the proofs.

The proof of Theorem \ref{thm:gibbsdiffeo_intro} relies on the following characterisation of the pressure which is of independent interest.
\begin{proposition}\label{pressure_intro}
Let $f: X \to X$ be a topologically mixing partially hyperbolic attracting diffeomorphism satisfying Lyapunov stability. 
For any  continuous function $G: X \to \mathbb R$
we have 
$$
P(G)
= \limsup_{n \to +\infty} \frac{1}{n}\log  \int_{W_\delta^{u}(x)}
 \exp\left( \sum_{k=0}^{n-1} (G-\Phi)(f^ky) \right) 
 d\lambda_{W_\delta^u(x)}(y). $$
\end{proposition}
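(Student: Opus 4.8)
The plan is to prove the two inequalities separately. For the lower bound, I would fix a weak\* limit point $\mu$ of the averages $\mu_n$ from Theorem~\ref{thm:gibbsdiffeo_intro} (or, working directly, of the measures built from the densities (1.2)); by that theorem $\mu$ is an $f$-invariant probability measure, so $h(\mu)+\int G\,d\mu \le P(G)$. The point is to show that the quantity
$$Q := \limsup_{n\to\infty}\frac1n\log\int_{W^u_\delta(x)}\exp\Big(\sum_{k=0}^{n-1}(G-\Phi)(f^ky)\Big)\,d\lambda_{W^u_\delta(x)}(y)$$
is at most $h(\mu)+\int G\,d\mu$, hence $Q\le P(G)$. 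For this I would use a standard covering/partition argument: cover $f^nW^u_\delta(x)$ by a $(n,\varepsilon)$-Bowen-ball structure, use the reformulation (1.4) to rewrite the integral as a sum over these pieces weighted by $\exp(\sum_{i=1}^n G(f^{-i}\cdot))$ times the induced volume of each piece, and then recognise this sum, via the variational principle machinery (Katok-type entropy estimates along unstable leaves), as being captured by the entropy plus $\int G$ of the limiting measure. The subexponential distortion of $f^n$ along unstable leaves — which is exactly where Lyapunov stability / the $C^{1+\alpha}$ hypothesis enters — is needed so that the density normalisation does not contribute exponentially, and so that $\lambda_{f^nW^u_\delta(x)}$ and the pushed-forward Lebesgue class stay comparable up to subexponential factors.

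For the upper bound $Q\ge P(G)$, I would start from an arbitrary ergodic $f$-invariant measure $\nu$ and show $h(\nu)+\int G\,d\nu \le Q$; taking the supremum over $\nu$ then gives $P(G)\le Q$. Here I would invoke the fact that for such partially hyperbolic attractors the entropy of $\nu$ is controlled by the unstable expansion, i.e. a Ledrappier--Young / Ruelle-type inequality $h(\nu)\le \int \Phi^-\,d\nu$ ... more precisely I would use that one can choose a generating partition subordinate to the unstable foliation and estimate the number of $n$-cylinders of positive $\nu$-measure that meet a fixed unstable plaque $W^u_\delta(x)$. Concretely: by the Shannon--McMillan--Breiman theorem, for $\nu$-typical points the $n$-cylinder has measure $\approx e^{-n h(\nu)}$, and $\int G\,d\mu_n$-type Birkhoff sums of $G$ along these cylinders are $\approx n\int G\,d\nu$; combining this with the fact that each such cylinder, transported back by $f^{-n}$, occupies a definite $\lambda_{W^u_\delta(x)}$-proportion (again using bounded distortion along unstables, and here genuinely using either the bounded-expansion condition on $E^{cs}$ or the subexponential contraction on $E^{cu}$ alluded to in the abstract) shows that the integral defining $Q$ is at least $e^{n(h(\nu)+\int G\,d\nu)}$ up to subexponential corrections. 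Summing over the $\approx e^{nh(\nu)}$ relevant cylinders and extracting the exponential growth rate yields the claim.

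I expect the \emph{main obstacle} to be the upper bound, and within it the step that transfers information about $\nu$-generic cylinders (which live globally on $X$) onto the single fixed unstable plaque $W^u_\delta(x)$: in the uniformly hyperbolic case this is handled by local product structure and the SRB/Gibbs property of $\lambda$, but in the partially hyperbolic case one has neither a product structure on $X$ nor uniform contraction in the centre direction, so one must use topological mixing to spread the plaque around (so that $f^n W^u_\delta(x)$ eventually $\varepsilon$-covers the support of $\nu$) and then control how the induced volume $\lambda$ is distorted — this is precisely what Lyapunov stability and the additional centre-direction hypotheses are designed to make possible. A secondary technical point is that the $\limsup$ (rather than $\lim$) in the statement means one cannot appeal to subadditivity directly; one works along a subsequence realising the $\limsup$ for the lower bound, and one must be careful that the upper-bound argument produces the inequality for \emph{every} large $n$ (or along a cofinal set) so as to bound the $\limsup$ from below.
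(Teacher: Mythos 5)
Your proposal takes a genuinely different route from the paper's, and one direction of it has a real gap. The paper proves Proposition~\ref{pressure_intro} directly from the Bowen--Walters spanning/separated-set definition of pressure (Lemma~\ref{defpressure}), never invoking the variational principle, Misiurewicz's lemma, or any invariant measure: one covers $f^{n+m}W^u_\delta(x)$ by $\epsilon$-balls in the induced leaf metric, shows via Lemma~\ref{cover0} (density of $f^m W^u_\delta(x)$, coming from mixing and local product structure) and Lyapunov stability that the $f^{-n}$-preimages of the centres form an $(n,2\epsilon)$-spanning set, and compares $Z_0(n,2\epsilon)$ to the integral; the reverse inequality uses maximal $\epsilon$-separated points in $f^nW^u_\delta(x)$ and $Z_1(n,\kappa\epsilon)$. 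Your $Q\le P(G)$ direction (via the weak$^*$ limit $\mu$ of the averaged pullbacks and entropy estimates) is sound and is essentially the Misiurewicz argument that the paper itself runs later inside the proof of Theorem~\ref{thm:gibbsdiffeo_intro}; as you set it up it uses only the $f$-invariance of $\mu$, not the equilibrium-state conclusion, so it is not circular --- but it is considerably heavier machinery than the paper deploys for what is meant to be a preliminary, self-contained lemma.

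The genuine gap is in your $P(G)\le Q$ direction. You want to take an arbitrary ergodic $\nu$, apply Shannon--McMillan--Breiman, count the roughly $e^{nh(\nu)}$ $\nu$-typical $n$-cylinders that meet the plaque, and conclude that ``each such cylinder, transported back by $f^{-n}$, occupies a definite $\lambda_{W^u_\delta(x)}$-proportion.'' That last step is false for general $\nu$: cylinders are sized by $\nu$-measure, not by the induced leaf volume $\lambda$, and there is no a priori comparability between the two. Such a comparability (cylinder $\nu$-measure $\asymp$ leaf $\lambda$-volume up to the Birkhoff weight) is exactly the Gibbs/SRB property one is trying to \emph{construct}, not something available for an arbitrary invariant measure, and spreading the plaque around by mixing does not repair it. The appeal to a Ruelle--Ledrappier--Young inequality also points the wrong way: it bounds $h(\nu)$ from \emph{above} by an integral of the unstable Jacobian, whereas you need a lower bound on $Q$. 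The paper sidesteps all of this by never bringing in an invariant $\nu$ in this direction: it estimates the topological quantity $Z_0(n,2\epsilon)$ directly against the integral over the leaf, where the only distortion control required is along the unstable leaf itself together with Lyapunov stability transversally (Lemma~\ref{bound}). You should replace your Katok/SMB cylinder-counting step with a direct spanning-set construction of that type.
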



Another interesting construction of equilibrium states for partially hyperbolic diffeomorphisms can be found in \cite{CPZ-2}. There the authors, Climenhaga, Pesin and Zelerowicz, construct unique equilibrium states by a Carath\'eodory type construction using fixed reference measures on pieces of local unstable manifold. They require the following (heuristic) additional conditions on a $C^2$ diffeomorphism $f : M \rightarrow M$:
\begin{itemize}
\item the map $f$ is topologically transitive and partially hyperbolic with integrable centre-stable bundle along which the expansion under $f^n$ is uniformly bounded independently of $n$ (which we call Lyapunov stability); and
\item the potential $G$ satisfies a `Bowen property' that uniformly bounds the difference between Birkhoff sums along nearby trajectories, independently of the trajectory length.
\end{itemize}

Additionally, in the setting of partially hyperbolic diffeomorphisms with centre isometries, Carrasco and Rodriguez-Hertz \cite{C-RH} provide a geometric construction of equilibrium states for H\"older potentials that are either the geometric potential or are constant along centre leaves. 

We finish this article with an extension of our construction to 
systems with a stable, centre-unstable splitting. 
There we  require a hypothesis  which is  less restrictive than Lyapunov stability. 
We require subexpontential contraction in centre-unstable manifolds (see Definition \ref{secicum}). 


\section{Definitions}

In this section we introduce the notion of partially hyperbolic dynamics and provide useful background results and classical examples. We then briefly recall the definition of Kolmogorov-Sinai entropy and state a useful result due to Misiurewicz.

\subsection{Partial Hyperbolicity}

We begin with the definition of partial hyperbolicity.

\begin{definition}[Partially hyperbolic set] \label{parthyp}
Let $M$ be a compact Riemannian manifold and $f : M \rightarrow M$ be a diffeomorphism. A closed $f$-invariant subset $X \subset M$ is said to be partially hyperbolic (in the broad sense) if:
\begin{itemize}
\item there exists a continuous splitting of the tangent bundle into subbundles $E^u$ and $E^{cs}$ such that $T_{X}M = E^u \oplus E^{cs}$; and
\item there is a Riemannian metric $|| \cdot ||$ on $M$ and constants $C_1 > 1$ and $0 < C_2 < C_1$ such that for every $x \in X$
\begin{align*}
|| Df_x v|| & \geq C_1 ||v|| \hbox{ \hspace{5mm} for $v \in E^u(x)$}, \\
|| Df_x v|| & \leq C_2 ||v|| \hbox{ \hspace{5mm} for $v \in E^{cs}(x)$}.
\end{align*}
\end{itemize}
\end{definition}

In the above definition we are using the property that the Riemannian metric can be chosen to be adapted (see \cite{HPsur}, \textsection 2.1.2). We shall also always assume that the map is topologically mixing. 

 If $C_2 < 1$ then $f$ is uniformly hyperbolic. Definition \ref{parthyp} also covers the more traditional and narrower definition of partial hyperbolicity involving a splitting into an expanding, centre and contracting direction. 


There is a continuous cone family $K^{cs}$ defined on $X$ which is $Df^{-1}$-invariant (\cite{HPsur}, Proposition 2.9). A curve $\gamma$ is a $cs$-curve if all its tangent vectors lie in $K^{cs}$.  We can now give the formal definition of Lyapunov stability.

\begin{definition} \label{LS}
(Lyapunov Stability) A partially hyperbolic diffeomorphism $f : X \rightarrow X$ has 
\emph{Lyapunov stability} if for every $\epsilon>0$ there exists an $\epsilon_0>0$ such that if $\gamma$ is a curve in $X$ with length at most $\epsilon_0$ and $n \geq 0$ is such that $f^n \gamma$ is a $cs$-curve then the length of $f^n \gamma$ is at most $\epsilon$.
\end{definition}

This condition is labelled (C1) in \cite{CPZ-2}. This property is important because unlike in the uniformly hyperbolic setting, it is possible that there exist multiple equilibrium states if this condition is not satisfied.

\begin{remark}
We note that in this article we will only be concerned with the existence of equilibrium states but it is worth observing that in \cite{CPZ-2} an example is provided which does not satisfy Lyapunov stability and has at least 2 measures of maximal entropy (for $G=0$).
\end{remark}

\subsection{Local manifolds}

We begin with the definition of a local unstable manifold.

\begin{definition}[Local unstable manifold] \label{LUM}
Choose $\rho > 0$ such that $C_2 < 1/\rho < C_1$. For $x \in X$ and sufficiently small $\delta > 0$ we define the local unstable manifold through $x$ as follows,
$$
W_\delta^{u}(x) = \{y \in M \hbox{ : } d(f^{-n}x, f^{-n}y) \leq \delta, \forall n \geq 0 \hbox{ and } d(f^{-n}x, f^{-n}y) / \rho^n \rightarrow 0 \hbox{ as }n \rightarrow \infty \}.
$$
\end{definition}

Intuitively these are the points whose backwards orbits stay $\delta$-close to that of $x$ and whose separation depends on $\rho$. The following is a useful version of the unstable manifold theorem.

\begin{thm} (\cite{shub}, Theorem IV.1.) \label{stab}
For $f \in C^r(M)$, $r > 1$, and $\delta > 0$ sufficiently small, there exists a $\lambda \in (C_1^{-1}, 1)$ and $C > 0$ such that for any $x \in X$ there is a $C^{r}$ local unstable manifold $W_{loc}^u(x) \subset M$ such that,
\begin{itemize}
\item $T_yW_\delta^u(x) = E^u(y)$ for every $y \in W_\delta^u(x) \cap X$;
\item $W_\delta^{u}(x)$ is a $C^{r}$ embedded  disk of dimension $\dim (E^u)$; and
\item for every $n \geq 0$ and $y \in W_\delta^u(x)$, $d(f^{-n}x, f^{-n}y) \leq C \lambda^{n} d(x,y)$.
\end{itemize}
\end{thm}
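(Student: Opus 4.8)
\emph{Proof plan.}
This is the Hadamard--Perron unstable manifold theorem specialised to broad--sense partial hyperbolicity, and I would prove it by the graph transform (Hadamard) method, following \cite{shub} with the single adaptation forced by the fact that $E^{cs}$ need not be contracted. First I would set up admissible disks. Fix $\delta$ small. Using the adapted metric and the $Df$--invariant unstable cone field $K^u$ dual to the cone field $K^{cs}$ furnished by Proposition~2.9 of \cite{HPsur}, for each $y\in X$ call an \emph{admissible disk at $y$} the image under $\exp_y$ of the graph of a $C^1$ map $\sigma:B_\delta^{E^u(y)}(0)\to E^{cs}(y)$ with $\sigma(0)=0$ and $D\sigma$ valued in a fixed sub-cone of $K^u$, and equip the set of such disks with the $C^0$ distance between their graph maps. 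Compactness of $X$ together with continuity of $Df$, $E^u$, $E^{cs}$ then allows one to choose $\delta$ so small that (i) for any $y$, the image $f(D)$ of an admissible disk $D$ at $f^{-1}y$ contains an admissible disk $\Gamma_y(D)$ at $y$ over the full $\delta$-ball in $E^u(y)$ --- here $\|Df|_{E^u}\|^{-1}\le C_1^{-1}<1$ makes the $E^u$-domain grow, and $Df$-invariance of $K^u$ keeps the slope under control once the nonlinear remainder is dominated --- and (ii) the \emph{graph transform} $\Gamma_y$ contracts the $C^0$ distance between admissible disks at $f^{-1}y$ by a factor $\theta:=C_2/C_1+O(\delta)<1$, the point being that $C_2/C_1<1$ precisely because $C_2<C_1$.

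Next I would iterate. For $x\in X$ and $n\ge 1$ let $D_n^{(0)}$ be the flat admissible disk at $f^{-n}x$ and put $D^{(n)}_0:=\Gamma_x\Gamma_{f^{-1}x}\cdots\Gamma_{f^{-(n-1)}x}(D^{(0)}_n)$, an admissible disk at $x$. By (ii) the sequence $(D^{(n)}_0)_n$ is $C^0$-Cauchy with rate $\theta^n$, hence converges to a Lipschitz disk; a standard second pass, applying the same contraction to the tangent planes and using that $Df$ maps $K^u$ uniformly strictly into $K^u$ and onto $E^u$, upgrades the convergence to $C^1$, and then the $C^r$-section (fiber contraction) theorem of Hirsch--Pugh--Shub, bootstrapped through $C^2,\dots,C^{\lfloor r\rfloor}$ with a Hölder estimate at the top order, shows the limit is a $C^r$ embedded disk of dimension $\dim E^u$; the bunching inequalities required at each order hold automatically since $E^u$ is uniformly dominant. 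Call this disk $\widehat W(x)$. By construction $f^{-1}\widehat W(x)$ coincides near $f^{-1}x$ with $\widehat W(f^{-1}x)$, and the $E^u$-coordinate of a point of $\widehat W(x)$ relative to $x$ is multiplied by at most $C_1^{-1}+O(\delta)$ under each $f^{-1}$; so there are $\lambda\in(C_1^{-1},1)$ --- which, shrinking $\delta$, may be taken below the fixed $\rho$ of Definition~\ref{LUM}, since $\rho>C_1^{-1}$ --- and $C>0$ with $d(f^{-n}x,f^{-n}y)\le C\lambda^n d(x,y)$ for all $y\in\widehat W(x)$, $n\ge 0$. In particular every $y\in\widehat W(x)$ satisfies $d(f^{-n}x,f^{-n}y)\le\delta$ for all $n$ and $d(f^{-n}x,f^{-n}y)/\rho^n\to 0$; conversely any $y$ with these two properties must lie on $\widehat W(x)$, because an orbit staying in the $\delta$-tube around that of $x$ with this subexponential rate cannot have a nonzero $E^{cs}$-component relative to the graph, by domination. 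Hence $\widehat W(x)=W^u_\delta(x)$ in the sense of Definition~\ref{LUM}, which gives the third asserted property and the dimension/embeddedness claim. Finally, for $y\in W^u_\delta(x)\cap X$ the tangent plane $T_yW^u_\delta(x)$ is a $Df$-invariant $\dim E^u$-plane inside $K^u$, and uniform domination forces $\bigcap_{n\ge 0}Df^n\,K^u(f^{-n}y)=E^u(y)$, so $T_yW^u_\delta(x)=E^u(y)$.

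The genuinely delicate step, and the only place where the broad-sense hypothesis (rather than $C_2<1$) enters, is (i)--(ii): the cone-invariance and $C^0$-contraction estimates must be carried out using only the \emph{relative} rate $C_2/C_1<1$, because $E^{cs}$ itself may be expanded under $f$, and $\delta$ must be fixed uniformly over the compact set $X$ so that the nonlinear remainders --- controlled by the moduli of continuity of $Df$ and of the splitting --- never close this gap; once that is arranged, the $C^1\to C^r$ bootstrap is routine. All of this is worked out in \cite[Ch.~5]{shub}. Alternatively one may simply observe that $W^u_\delta(x)$ is the strong stable manifold of $f^{-1}$ at $x$ for the dominated splitting $E^{cs}\oplus E^u$ over the invariant set $X$, and quote the invariant-section form of the stable manifold theorem.
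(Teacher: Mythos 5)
The paper does not prove this statement at all: it is quoted verbatim from Shub (Theorem IV.1) as a black box, so there is no in-paper argument to compare against. Your sketch reconstructs the standard Hadamard graph-transform proof from the cited source (equivalently, the strong stable manifold theorem for $f^{-1}$ relative to the dominated splitting $E^{cs}\oplus E^u$, which is the alternative you mention at the end), and the overall architecture --- admissible disks in an invariant cone, graph transform, $C^0$ then $C^1$ convergence, $C^r$ bootstrap via fibre contraction, identification of the limit with $W^u_\delta(x)$ by the $\rho$-growth condition of Definition \ref{LUM} --- is correct and is exactly what the reference does.

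One step, as literally written, would fail and needs the repair you only allude to. In step (ii) you assert that $\Gamma_y$ contracts the \emph{plain} $C^0$ distance between graph maps by $\theta=C_2/C_1+O(\delta)$, and the Cauchy estimate for $(D^{(n)}_0)_n$ leans on this. The honest computation gives a factor $\approx C_2+O(\delta)$ for the unweighted $C^0$ distance (the fibre displacement is multiplied by at most $\|Df|_{E^{cs}}\|\le C_2$, and the base reparametrisation only contributes an $O(\delta)$ correction), which exceeds $1$ precisely in the broad-sense case $C_2\ge 1$ that this theorem is designed to cover; iterating it $n$ times then gives $C_2^n$, not $\theta^n$. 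The ratio $C_2/C_1$ appears only if you measure sections through the base point in the weighted metric $\rho(\sigma_1,\sigma_2)=\sup_{u\neq 0}|\sigma_1(u)-\sigma_2(u)|/|u|$ (finite since $\sigma_i(0)=0$ and the disks are Lipschitz): the numerator picks up $C_2$, the denominator picks up $C_1$, and $\rho$ dominates $\|\cdot\|_{C^0}/\delta$, so the Cauchy argument goes through. Since you explicitly identify the relative rate $C_2/C_1<1$ as the only usable hyperbolicity, this is a fixable imprecision rather than a wrong idea, but the metric must be changed for the proof to close. A similar precision is needed in your converse inclusion: the reason a $\delta$-shadowing orbit with $d(f^{-n}x,f^{-n}y)=o(\rho^n)$ must lie on the graph is the inequality $\rho<C_2^{-1}$ built into Definition \ref{LUM}, since the transverse deviation can decay under $f^{-n}$ no faster than $(C_2+O(\delta))^{-n}$.
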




In the uniformly hyperbolic setting we are able to define a similar local stable manifold focusing on the forward orbits of points but it is not always possible in the partially hyperbolic setting. The following theorem states that with the assumption of $f$ satisfying Definition \ref{LS}, there is a local centre-stable manifold that plays the role of the stable manifold from the uniform hyperbolic setting.

\begin{thm} \label{LPS}
(Local product structure, \cite{CPZ-2}, Theorem 2.4) Let $f : M \rightarrow M$ be a diffeomorphism and $X \subset M$ be a compact $f$-invariant subset admitting a splitting $E^{cs} \oplus E^u$ satisfying Lyapunov stability. 
For every $x \in X$ there is a local manifold $W_{loc}^{cs}(x) \subset M$ satisfying:
\begin{itemize}
\item $T_yW_{loc}^{cs}(x) = E^{cs}(y)$ for every $y \in W_{loc}^{cs}(x) \cap X$; and 
\item $W_{loc}^{cs}(x)$ is a $C^{1}$ embedded  disk of dimension $\dim (E^{cs})$.
\end{itemize}
Moreover, there exists an $\epsilon > 0$ such that if $x, y \in X$ and $d(x,y) < \epsilon$ then $W_\delta^u(x) \cap W_{loc}^{cs}(y)$ is exactly one point which we denote $[x,y]$.
\end{thm}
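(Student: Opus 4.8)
\emph{Plan.} The statement is quoted from \cite{CPZ-2}, so the plan is to follow the scheme there, which splits into (i) constructing the local centre-stable disks $W^{cs}_{loc}(x)$, and (ii) deducing the product structure by transversality. For (i) the structural input is that $E^{cs}\oplus E^u$ is a \emph{dominated} splitting: in the adapted metric of Definition \ref{parthyp} there is the uniform gap $C_2<C_1$ with $E^u$ dominating, and accordingly the $Df^{-1}$-invariant cone family $K^{cs}$ around $E^{cs}$, so one is in the setting of the Hadamard--Perron / graph-transform construction (see \cite{HPsur}). I would fix a small radius $r>0$, pass to the exponential charts $\exp_x^{-1}$ at points of $X$, in which $f$ becomes a $C^{1+\alpha}$ perturbation of $Df_x$ preserving $K^{cs}$, and run the natural graph transform for the centre-stable direction --- which uses $f^{-1}$, since $K^{cs}$ is $Df^{-1}$-invariant --- on $C^1$ sections whose graphs are $E^{cs}$-cone disks of radius $r$. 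Because $Df^{-1}$ contracts the cone family $K^{cs}$ and contracts $E^u$-directions more strongly than $E^{cs}$-directions, this is a uniform contraction with rate $\le C_2/C_1<1$ ($C^{1+\alpha}$-control of $Df$ being what gives contraction at the $C^1$ level). Its unique fixed point is $W^{cs}_{loc}(x)$: a $C^1$ embedded disk of dimension $\dim E^{cs}$ (no better than $C^1$, since there is no spectral gap inside $E^{cs}$), depending continuously on $x$, and with $T_yW^{cs}_{loc}(x)=E^{cs}(y)$ for $y\in W^{cs}_{loc}(x)\cap X$ --- this last point because for such $y$ every forward iterate $f^ny$ lies in $X$, so the cone contraction $\bigcap_{n\ge 0}Df^{-n}_{f^ny}(K^{cs}(f^ny))=E^{cs}(y)$ pins down the tangent direction. (Off $X$ the disk is only tangent to the cone, as one must expect when $E^{cs}$ is not integrable, so the plaques do not in general form a foliation.)

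\emph{Lyapunov stability and the product structure.} Two upgrades beyond the raw local construction are needed, and this is where Lyapunov stability enters: one wants the radius $\delta$ uniform over $x\in X$, and one wants $f^n(W^{cs}_{loc}(x))$ to remain inside a fixed small neighbourhood of $f^nx$ for all $n\ge 0$ (genuine, not merely local, forward invariance). I would get both from the observation that any short subcurve of $W^{cs}_{loc}(x)$ is a $cs$-curve, hence so is each of its forward iterates, whence Definition \ref{LS} bounds their lengths uniformly in $n$ and prevents the centre-stable plaques from escaping the charts in which they were built --- controlling exactly this escape is what Lyapunov stability is there for (cf.\ condition (C1) of \cite{CPZ-2}). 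Given the plaques, the ``Moreover'' clause is differential topology: $W^u_\delta(x)$ is by Theorem \ref{stab} a $C^r$ disk through $x$ tangent to $E^u$ along $X$, $W^{cs}_{loc}(y)$ a $C^1$ disk through $y$ of complementary dimension tangent to $E^{cs}$ along $X$, and by compactness the angle between $E^u$ and $E^{cs}$ is bounded below. The implicit function theorem then produces $\epsilon>0$ --- depending on $\delta$, this angle bound, and the uniform $C^1$-geometry of the $W^{cs}_{loc}$, and with $\epsilon\ll\delta$ --- such that $d(x,y)<\epsilon$ forces $W^u_\delta(x)$ and $W^{cs}_{loc}(y)$ to meet transversally in exactly one point $[x,y]$; uniqueness is the ``two graphs over complementary subspaces meet once'' picture in a single chart, and continuity of $(x,y)\mapsto[x,y]$ follows from continuity of the two families of disks.

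\emph{Main difficulty.} The delicate step is (i): unlike the unstable direction, $E^{cs}$ need not be uniquely integrable, so $W^{cs}_{loc}(x)$ is not canonical, and the graph transform must be set up to deliver all the required properties at once --- $C^1$ regularity, tangency to $E^{cs}$ exactly along $X$, continuous dependence on $x$, and, via Lyapunov stability, a uniform size together with honest forward invariance --- and to do so in the $C^{1+\alpha}$ category rather than the $C^2$ one of \cite{CPZ-2}. Once the centre-stable plaques are available with this uniform geometric control, step (ii) is routine.
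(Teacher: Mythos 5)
The paper does not prove this statement; it is quoted directly from \cite{CPZ-2}, Theorem 2.4, and used as a black box, so there is no in-paper argument for your sketch to be measured against. That said, your outline is a faithful account of the standard graph-transform route to such a result and matches the general scheme of \cite{CPZ-2}: $Df^{-1}$-invariant $E^{cs}$-cone disks in exponential charts, cone contraction at rate $C_2/C_1<1$ from the dominated splitting, tangency to $E^{cs}$ precisely along $X$ via the nested intersection $\bigcap_{n\ge 0} Df^{-n}K^{cs}$, and transversality with a compactness-based lower angle bound for the ``moreover'' clause. You also correctly flag the essential subtleties: $E^{cs}$ need not be uniquely integrable (so the plaques are non-canonical and do not foliate), and one gets only $C^1$ regularity in the absence of a gap inside $E^{cs}$. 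One point worth sharpening if you were to write this out: the uniform radius of the plaques comes from compactness and the uniform $C^1$ geometry of the cone family alone; what Lyapunov stability is actually buying is that forward iterates of short curves with $cs$-tangent images stay short, which keeps $f^nW^{cs}_{loc}(x)$ from escaping the charts and is exactly the property the present paper then leans on in Lemma \ref{cover0} and in the spanning-set step of the proof of Proposition \ref{pressure}.
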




\subsection{Examples}

This subsection contains some classic examples of partially hyperbolic diffeomorphisms. 

Linear hyperbolic toral automorphisms are the most basic examples of Anosov diffeomorphisms and have no eigenvalues on the unit circle. We begin with an extension of these to partially hyperbolic diffeomorphisms by relaxing this condition.

\begin{example}[Quasihyperbolic linear toral automorphism]
Let $M = \mathbb{T}^n$, $T_A : M \rightarrow M$ be a linear map corresponding to an $n\times n$ integer valued matrix $A$  with $\det A = 1$ and such that $A$ has an eigenvalue which is not a root of unity.
The unstable direction corresponds to the eigenspace for the eigenvalues of absolute values greater than 1. The centre-stable direction corresponds to the eigenspace for eigenvalues sitting on or inside the unit circle. In fact the centre-stable direction could be chosen to be expanding in some directions if the eigenvalues are greater than 1 and strictly less then the eigenvalues in the specified unstable direction. However, in this case the maximal expansion of the centre-stable bundle would have exponential growth rate and so would not satisfy the Lyapunov stability condition. 
\end{example}



\begin{example}[Compact group extensions]
Consider an Anosov diffeomorphism $f : M \rightarrow M$ and $G$ a compact Lie group. Now consider a smooth map $\varphi : M \rightarrow G$. We can define the map $F : M \times G \rightarrow M \times G$ by,
\begin{equation*}
F(x,y) = (f(x), \varphi(x) y) \hbox{ \hspace{5mm} for $x \in M, y \in G$}.
\end{equation*}
Left translations are isometries of $G$ in the bi-invariant metric and so this map is partially hyperbolic with $G$ corresponding to the neutral direction.
\end{example}

\begin{example}[Time 1-Anosov flow]
Let $\phi_t : M \rightarrow M$ be an Anosov flow (\textsection 17.4, \cite{kh}) on a Riemannian manifold, $M$. By definition the tangent bundle exhibits a continuous splitting, $TM = E^u \oplus E^0 \oplus E^s$ with expanding direction $E^u$, stable direction $E^s$ and $1$-dimensional flow direction $E^0$. The time one map, $\phi_1$, is a partially hyperbolic diffeomorphism. 
\end{example}

A classic result due to Anosov \cite{anosov} states that the geodesic flow on a negatively curved manifold is an Anosov flow. The following example is a compact group extension of the geodesic flow.


\begin{example}[Time 1-Frame flow]
Let $V$ be a closed oriented $n$-dimensional manifold of negative sectional curvature and $M = SV$, the unit tangent bundle of $V$. Let $N$ be the space of positively oriented orthonormal $n$-frames in $TV$, the tangent bundle of $V$. This produces a fiber bundle $\pi : N \rightarrow M$ where $\pi$ is the natural projection onto the first vector. 
We can identify each fiber $N_x$ with $SO(n-1)$. Define the frame flow $\Phi_t$ by mapping the first vector forward under the geodesic flow and the other vectors by parallel translation along the geodesic defined by the first vector. Notice that for any $t$, $\pi \circ \Phi_t = g_t \circ \pi$, where $g_t$ is the geodesic flow. The time one map arising from the frame flow has centre bundle with dimension $1+\dim SO(n-1)$ and unstable/stable directions coming from the geodesic flow.
\end{example}

We see that all of the above examples have neutral centres and as such all satisfy the Lyapunov stability condition. Therefore, having this condition does not appear to be too restrictive.

\subsection{Entropy}

We now describe  some results on the entropy of invariant probability measures.  For the duration of this subsection we will only require 
the weaker assumption that $f: X \to X$ is a homeomorphism.

We begin with some standard definitions, \cite{walters}.

\begin{definition}
Given a finite measurable partition $\mathcal P = \{P_1, \cdots, P_k\}$ and a probablity measure $\nu$ we can associate the
 entropy of the partition defined by 
 $$
 H_\nu(\mathcal P)
 =  - \sum_{i=1}^k  \nu(P_i)  \log \nu(P_i). 
 $$
 \end{definition}
 
 \noindent
 Given $n \geq 1$ we let $\bigvee_{i=0}^{n-1} f^{-i} \mathcal P = \{P_{i_0} \cap f^{-1} P_{i_1} \cap \cdots  \cap f^{-(n-1)} P_{i_{n-1}}
 \hbox{ : } 1 \leq  i_0, \cdots, i_{n-1} \leq k
 \}$ be the refinement of the partitions $\mathcal P, f^{-1}\mathcal P, \cdots, f^{-(n-1)}\mathcal P$.

 \begin{definition}
We can  define the entropy associated to the partition $\mathcal P$ by 
 $$
 h_\nu(\mathcal P) = \lim_{n \to +\infty} \frac{1}{n} H_\nu\left( \bigvee_{i=0}^{n-1} f^{-i} \mathcal P\right).
 $$
Finally, the entropy with respect to the measure is defined by
$$
h(\nu) = \sup_{\mathcal{P}}\{h_{\nu}(\mathcal{P}) \hbox{ : $\mathcal{P}$ is a countable partition with $H_{\nu}(\mathcal{P}) < \infty$}\}.
$$
In the case that $\mathcal P$ is a generating partition we have that $h(\nu) = h_\nu(\mathcal P)$ is the entropy of the measure $\nu$.
 \end{definition}

We require the following application of a lemma due to Misiurewicz \cite{misiurewicz}, relating the entropy with respect to $\lambda_n$ and the averaging of pushforwards $\mu_n$ (defined in (1.2) and (1.3) respectively).

  \begin{lemma} \label{Misent} For any $n \geq 2$ and $0 < q < n$,
  $$
  q H_{\lambda_n} \left(
  \bigvee_{h=0}^{n-1} f^{-h} \mathcal P 
 \right)
 \leq 
n   H_{\mu_n}\left(
  \bigvee_{i=0}^{q-1} f^{-i} \mathcal P 
 \right)
 +
 2q^2 \log\hbox{\rm Card} (\mathcal P).
  $$
  \end{lemma}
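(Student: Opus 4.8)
The plan is to reproduce, in this quantitative form, the combinatorial device underlying Misiurewicz's proof of the variational principle, using only the subadditivity and concavity of the entropy functional $H_\nu(\cdot)$ (see \cite{walters}). Throughout write $\mathcal Q := \bigvee_{i=0}^{q-1} f^{-i}\mathcal P$, and recall that for any Borel probability measure $\nu$ and any $h\ge 0$ one has $H_\nu(f^{-h}\mathcal P)=H_{f^h_*\nu}(\mathcal P)\le\log\mathrm{Card}(\mathcal P)$, since $\nu(f^{-h}P_i)=(f^h_*\nu)(P_i)$.

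\emph{Step 1: the combinatorial decomposition.} Fix $j\in\{0,1,\dots,q-1\}$ and split the index set $\{0,1,\dots,n-1\}$ into the ``head'' $\{0,\dots,j-1\}$, the successive length-$q$ blocks $\{j+rq,\dots,j+(r+1)q-1\}$ that fit inside $\{0,\dots,n-1\}$, and a ``tail'' consisting of the remaining indices at the right end. The head has $j<q$ elements and the tail has $<q$ elements, so together they contribute fewer than $2q$ indices. Applying subadditivity $H_\nu(\mathcal A\vee\mathcal B)\le H_\nu(\mathcal A)+H_\nu(\mathcal B)$ to $\nu=\lambda_n$ along this decomposition, and noting that $f^{-(j+rq)}\mathcal Q=\bigvee_{i=0}^{q-1}f^{-(j+rq+i)}\mathcal P$ is exactly the $r$-th block, gives
\[
H_{\lambda_n}\!\left(\bigvee_{h=0}^{n-1} f^{-h}\mathcal P\right)\ \le\ \sum_{r} H_{f^{\,j+rq}_*\lambda_n}(\mathcal Q)\ +\ 2q\log\mathrm{Card}(\mathcal P).
\]

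\emph{Step 2: sum over residues and re-index.} Summing the previous inequality over $j=0,1,\dots,q-1$, the key point is that the integers $j+rq$ occurring on the right are pairwise distinct (the residue $j$ and the quotient $r$ determine each other) and all lie in $\{0,1,\dots,n-1\}$; hence $\sum_{j=0}^{q-1}\sum_r H_{f^{\,j+rq}_*\lambda_n}(\mathcal Q)\le\sum_{k=0}^{n-1}H_{f^k_*\lambda_n}(\mathcal Q)$, and we obtain
\[
q\,H_{\lambda_n}\!\left(\bigvee_{h=0}^{n-1} f^{-h}\mathcal P\right)\ \le\ \sum_{k=0}^{n-1}H_{f^k_*\lambda_n}(\mathcal Q)\ +\ 2q^2\log\mathrm{Card}(\mathcal P).
\]

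\emph{Step 3: concavity.} Since $t\mapsto -t\log t$ is concave and $\mu\mapsto\mu(Q_j)$ is affine, the map $\mu\mapsto H_\mu(\mathcal Q)$ is concave on the space of probability measures; as $\mu_n=\frac1n\sum_{k=0}^{n-1}f^k_*\lambda_n$, Jensen's inequality yields $\sum_{k=0}^{n-1}H_{f^k_*\lambda_n}(\mathcal Q)\le n\,H_{\mu_n}(\mathcal Q)$. Substituting into the last display gives precisely the claimed bound. The only place demanding care is the bookkeeping in Steps 1--2: one must verify that the head and tail together never exceed $2q$ indices (this is what produces the factor $2q^2$ after summing over the $q$ residues) and that each $k\in\{0,\dots,n-1\}$ is counted at most once on the right-hand side; both are elementary counting arguments, but they are where the constants come from.
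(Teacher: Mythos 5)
Your proof is correct and reproduces precisely the Misiurewicz block-decomposition argument to which the paper explicitly defers (Walters, p.~220): partitioning $\{0,\dots,n-1\}$ into $q$-blocks plus a head and tail of fewer than $2q$ indices, applying subadditivity, summing over the $q$ residues, and finishing with concavity of $\nu\mapsto H_\nu(\mathcal Q)$. Since the paper omits the proof and cites this exact source, your approach is the same as the paper's, and the bookkeeping producing the $2q^2\log\mathrm{Card}(\mathcal P)$ term is sound.
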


We shall not provide a proof here but one can be found in \cite{walters}, p.220.

\section{Pressure and growth}

The pressure $P(G)$ has various different interpretations in terms of the growth of appropriate quantities. We begin with the standard definition of the pressure $P(G)$ written in terms of the growth rates of sets of spanning sets and separated sets due to Bowen.

Recall that given $\epsilon > 0$ and $n \geq 1$,  an $(n,\epsilon)$-spanning set $S \subset X$ is such that $ \cup_{x\in S} B(x,n,\epsilon)$ covers $X$,  where $B(x,n,\epsilon) := \cap_{k=0}^{n-1}f^{-k} B(f^kx, \epsilon)$  is called a   Bowen ball. 
On the other hand  a\textcolor{black}{n} $(n,\epsilon)$-separated  set $\Sigma \subset X$ 
is such that \textcolor{black}{$d_n(x,y) > \epsilon$} for $x,y \in \Sigma$
 (and, in particular, $B(x,n,\epsilon/2)$, $x \in \Sigma$, are disjoint
in $X$).

\begin{lemma}  \label{defpressure}
Given 
$n \geq 1$ and $\epsilon > 0$,  let 
$$
Z_{0}(n, \epsilon) = \inf\left\{
\sum_{y \in S} \exp(G^n(y))
\hbox{ : } S \hbox{ is an $(n, \epsilon)$-spanning set }
\right\}
$$
and 
$$
Z_{1}(n, \epsilon) = \sup\left\{
\sum_{y \in \Sigma } \exp(G^n(y))
\hbox{ : } \Sigma  \hbox{ is an $(n, \epsilon)$-separated set }
\right\}.
$$
where $G^n(x) = \sum_{j=0}^{n-1}G(f^jx)$.
Then the following limits exist and are equal
$$
P(G):=
\lim_{\epsilon \to 0}
\limsup_{n \to +\infty} \frac{1}{n}\log Z_{0}(n, \epsilon)
 = \lim_{\epsilon \to 0}
\limsup_{n \to +\infty} \frac{1}{n}\log Z_{1}(n, \epsilon).
$$
\end{lemma}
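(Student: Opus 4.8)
The plan is to run the classical comparison between $(n,\epsilon)$-spanning and $(n,\epsilon)$-separated sets (the argument goes back to Bowen; cf.\ \cite{walters}), keeping track of how the estimates degrade when the scale is halved, with the uniform continuity of $G$ absorbing the errors. Nothing in the argument uses partial hyperbolicity, only that $f$ is a continuous self-map of the compact space $X$. First I would record the soft facts. For fixed $n$, both $\epsilon\mapsto Z_0(n,\epsilon)$ and $\epsilon\mapsto Z_1(n,\epsilon)$ are non-increasing, since enlarging $\epsilon$ enlarges the Bowen balls $B(x,n,\epsilon)$, making spanning sets cheaper and forcing separated sets to be smaller. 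Hence $\epsilon\mapsto\limsup_n\frac1n\log Z_i(n,\epsilon)$ is non-decreasing as $\epsilon\downarrow 0$, so the limits $\lim_{\epsilon\to0}\limsup_n\frac1n\log Z_i(n,\epsilon)$ exist for $i=0,1$; compactness of $X$ gives, for each $\epsilon$, a finite $(n,\epsilon)$-spanning set (so $Z_0(n,\epsilon)<\infty$) and a bound on the cardinality of $(n,\epsilon)$-separated sets, so with $\|G\|_\infty<\infty$ every quantity in sight is finite.

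The heart of the matter is the two-sided chain
$$
Z_0(n,\epsilon) \le Z_1(n,\epsilon) \le e^{\,n\,\mathrm{var}(G,\epsilon)}\,Z_0(n,\epsilon/2),
$$
where $\mathrm{var}(G,\epsilon):=\sup\{\,|G(a)-G(b)| : d(a,b)\le\epsilon\,\}$. For the left inequality I take a maximal (under inclusion) $(n,\epsilon)$-separated set $\Sigma$: maximality forces $\Sigma$ to be $(n,\epsilon)$-spanning (up to the harmless $\le$ versus $<$ discrepancy in the definition of $B(x,n,\epsilon)$, which washes out as $\epsilon\to0$), so $Z_0(n,\epsilon)\le\sum_{y\in\Sigma}e^{G^n(y)}\le Z_1(n,\epsilon)$. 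For the right inequality, let $\Sigma$ be any $(n,\epsilon)$-separated set and $S$ any $(n,\epsilon/2)$-spanning set, and choose $\phi\colon\Sigma\to S$ with $x\in B(\phi(x),n,\epsilon/2)$ for each $x\in\Sigma$. The triangle inequality for the Bowen metric $d_n$ shows $\phi$ is injective (two points of $\Sigma$ in a common $\epsilon/2$-Bowen ball would satisfy $d_n<\epsilon$, contradicting separation), while $d_n(x,\phi(x))<\epsilon/2$ forces $G^n(x)\le G^n(\phi(x))+n\,\mathrm{var}(G,\epsilon)$; summing over $\Sigma$ and then taking $\sup_\Sigma$ and $\inf_S$ gives the stated bound.

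Applying $\frac1n\log(\cdot)$ and $\limsup_n$ to the chain gives
$$
\limsup_n\tfrac1n\log Z_0(n,\epsilon) \le \limsup_n\tfrac1n\log Z_1(n,\epsilon) \le \mathrm{var}(G,\epsilon)+\limsup_n\tfrac1n\log Z_0(n,\epsilon/2).
$$
By uniform continuity of $G$ on the compact space $X$, $\mathrm{var}(G,\epsilon)\to0$ as $\epsilon\to0$, and by the monotonicity from the first paragraph $\lim_{\epsilon\to0}\limsup_n\frac1n\log Z_0(n,\epsilon/2)=\lim_{\epsilon\to0}\limsup_n\frac1n\log Z_0(n,\epsilon)$; so the middle term is squeezed between two expressions with a common limit, which proves that $\lim_{\epsilon\to0}\limsup_n\frac1n\log Z_1(n,\epsilon)$ exists and equals $\lim_{\epsilon\to0}\limsup_n\frac1n\log Z_0(n,\epsilon)$, the common value being $P(G)$. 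The argument is entirely soft; the one place that calls for care is the scale bookkeeping in the middle step — pairing $(n,\epsilon)$-separated sets against $(n,\epsilon/2)$-spanning sets, verifying the injectivity of $\phi$, and inserting the (ultimately negligible) factor $e^{n\,\mathrm{var}(G,\epsilon)}$ in the right place.
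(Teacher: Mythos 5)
The paper does not prove Lemma \ref{defpressure} itself but simply cites Walters, chapter 9; your argument is the standard Bowen--Walters comparison between spanning and separated sets from that reference, carried out correctly (the two-sided chain, the injective pairing at scale $\epsilon/2$, and the squeeze using uniform continuity of $G$). So your proposal is correct and is essentially the same proof the paper is pointing to.
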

(See \cite{walters}, chapter 9)

\begin{definition}
    For a continuous potential $G : X \rightarrow \mathbb{R}$ the value $P(G)$ 
    is called the {\it topological pressure} of $G$. 
\end{definition}

We will need the following characterisation of the pressure in terms of growth rates of appropriate densities on a piece of unstable manifold.

\begin{proposition} \label{pressure}
Let $f: X \to X$ be a mixing partially hyperbolic attracting diffeomorphism satisfying Lyapunov stability. 
For any  continuous function $G: X \to \mathbb R$
we have 
$$
P(G)
= \limsup_{n \to +\infty} \frac{1}{n}\log  \int_{W_\delta^{u}(x)}
 \exp\left( \sum_{k=0}^{n-1} (G-\Phi)(f^ky) \right) 
 d\lambda_{W_\delta^u(x)}(y). \eqno(3.1)$$
\end{proposition}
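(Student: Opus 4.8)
The plan is to establish (3.1) by proving two inequalities, using Lemma \ref{defpressure} (Bowen's definition of pressure in terms of spanning and separated sets) as the bridge. Write $I_n := \int_{W_\delta^u(x)} \exp\left(\sum_{k=0}^{n-1}(G-\Phi)(f^ky)\right) d\lambda(y)$ for the integral in question, and recall the geometric reformulation: since $\exp(-\sum_{k=0}^{n-1}\Phi(f^ky))$ is the Jacobian of $f^n$ on $W_\delta^u(x)$, the change of variables turns $I_n$ into $\int_{f^n W_\delta^u(x)} \exp\left(\sum_{i=1}^{n} G(f^{-i}y)\right) d\lambda_{f^n W_\delta^u}(y)$, i.e. an integral over the (large) image leaf of a Birkhoff-type weight. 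This is the form that makes the comparison with $(n,\epsilon)$-separated and spanning sets transparent.

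\textbf{Upper bound} $\limsup_n \tfrac1n \log I_n \le P(G)$. Fix $\epsilon > 0$ and let $\Sigma$ be a maximal $(n,\epsilon/2)$-separated subset of $X$; then the Bowen balls $\{B(z,n,\epsilon/2)\}_{z\in\Sigma}$ cover $X$, so $\{f^{-n}B(f^n z, \epsilon/2) : z \in \Sigma\}$ — or more precisely a suitable cover by dynamical balls — covers the image leaf $f^n W_\delta^u(x)$. Cut $f^n W_\delta^u(x)$ into pieces, each contained in one such dynamical ball; on each piece the weight $\exp(\sum_{i=1}^n G(f^{-i}y))$ is, up to a factor $e^{n\gamma(\epsilon)}$ with $\gamma(\epsilon)\to 0$ as $\epsilon\to 0$ (uniform continuity of $G$), comparable to $\exp(G^n(\cdot))$ evaluated at the corresponding point of $\Sigma$, pulled back appropriately; and the $\lambda_{f^nW_\delta^u}$-measure of each piece is bounded by a constant (using bounded geometry / bounded distortion of unstable leaves, which follows from the $C^{1+\alpha}$ hypothesis and Theorem \ref{stab}). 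Summing gives $I_n \le C\, e^{n\gamma(\epsilon)} Z_1(n,\epsilon/2)$, and taking $\tfrac1n\log$, then $n\to\infty$, then $\epsilon\to 0$ yields the bound.

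\textbf{Lower bound} $\limsup_n \tfrac1n \log I_n \ge P(G)$. Here I would use topological mixing together with Lyapunov stability. Given $\epsilon>0$, take an $(n,\epsilon)$-separated set $\Sigma$ of near-maximal weight. Using mixing, each $z \in \Sigma$ can be shadowed: there is a point $w_z$ in (a small neighbourhood of) $W_\delta^u(x)$ whose forward orbit of length $n$ stays within $\epsilon$ of that of $z$ — this is where Lyapunov stability is essential, since it controls the centre-stable directions so that the stable/centre-stable holonomy does not expand, letting us transfer the separated set onto the unstable leaf with uniformly controlled distortion. The corresponding image points in $f^n W_\delta^u(x)$ are then $(n,\epsilon')$-separated for some comparable $\epsilon'$, so by bounded distortion they carry disjoint pieces of definite $\lambda_{f^nW_\delta^u}$-measure, and the weight on each is at least $e^{-n\gamma(\epsilon)}$ times the weight of $z$. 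Hence $I_n \ge c\, e^{-n\gamma(\epsilon)} Z_1(n,\epsilon)$, and again $\tfrac1n\log$, $n\to\infty$, $\epsilon\to0$ finishes it.

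\textbf{The main obstacle} is the lower bound, specifically producing enough points on the single fixed leaf $W_\delta^u(x)$ to see all of a near-optimal separated set: one must combine topological mixing (to return near any prescribed orbit) with the unstable manifold theorem and, crucially, Lyapunov stability to keep the centre-stable holonomies from distorting lengths, so that the resulting points genuinely lie on (or uniformly close to) $W_\delta^u(x)$ with bounded Jacobian distortion. A secondary technical point, needed throughout, is the bounded-distortion estimate for $\prod |\det(Df|E_{f^iy}^u)|$ along the leaf, which uses Hölder continuity of $x\mapsto E^u_x$ and of $Df$; I would record this as a preliminary lemma (or cite it) before carrying out either bound. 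Note also that only $\limsup$ appears in (3.1), which is what allows the two one-sided estimates to be combined without needing the limit to exist along the full sequence.
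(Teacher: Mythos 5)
Your proposal is correct in its broad structure and relies on exactly the right ingredients (topological mixing to get a dense forward image of the leaf, Lyapunov stability to control the centre-stable holonomy, and uniform continuity of $G$), but it organizes the two estimates in a way dual to the paper's, and one of the technical tools you reach for is stronger than what is available or needed.

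The paper constructs both its separated set and its spanning set directly on (preimages of) the image leaf. For $P(G) \geq \limsup$, it takes a maximal $\epsilon$-separated subset $\{x_i\}$ of $f^n W_\delta^u(x)$ in the leaf metric $d_u$; the preimages $y_i = f^{-n}x_i$ are automatically $(n,\kappa\epsilon)$-separated because $f^{-1}$ contracts along unstable leaves, the balls $B_{d_u}(x_i,\epsilon)$ cover $f^n W_\delta^u(x)$ by maximality, and $Z_1(n,\kappa\epsilon)$ is bounded below by $e^{-n\tau}L^{-1}$ times the integral, where $L=\sup_z \lambda(B_{d_u}(z,\epsilon))$. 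For $P(G) \leq \limsup$, it covers $f^{n+m}W_\delta^u(x)$ by $\epsilon$-balls with $\epsilon/2$-separated centres and shows, via the density Lemma \ref{cover0} together with Lyapunov stability, that the preimages $y_i=f^{-n}x_i \in f^m W_\delta^u(x)$ form an $(n,2\epsilon)$-spanning set; then $Z_0(n,2\epsilon)\leq \sum_i e^{G^n(y_i)}$ is bounded above by $M^{-1}e^{n\tau}$ times the integral, where $M=\inf_z\lambda(B_{d_u}(z,\epsilon/4))$, using that the $\epsilon/4$-balls are pairwise disjoint in the leaf. You instead start from near-maximal separated sets in the ambient $X$ and, for the lower bound, shadow each point onto (an image of) the leaf. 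This is the same content as the paper's spanning-set construction seen from the other side, and it does work; but choosing the points directly on the image leaf, as the paper does, sidesteps the extra step you would otherwise have to supply, namely that shadowing preserves $(n,\epsilon)$-separation so that the shadowed points carry disjoint leaf-balls (this is again Lyapunov stability, applied to two points on the same centre-stable leaf).

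The one place your proposal asks for more than is available: you invoke a $C^{1+\alpha}$ bounded-distortion estimate for $\prod_i\bigl|\det(Df|E^u_{f^i y})\bigr|$ as a preliminary lemma used throughout. Proposition \ref{pressure} is not stated under a $C^{1+\alpha}$ hypothesis, and the paper's proof uses no Jacobian distortion estimate at all. The device that makes this unnecessary is the change-of-variables reformulation (3.2): once the integral is rewritten as $\int_{f^n W_\delta^u(x)} e^{G^n(f^{-n}z)}\,d\lambda_{f^n W_\delta^u(x)}(z)$, the Jacobian has been absorbed into the induced volume on the image leaf, and the only uniform geometric input required is the two-sided bound $M\leq\lambda(B_{d_u}(z,\cdot))\leq L$ on the measure of small balls inside $C^1$ embedded unstable leaves, together with Lemma \ref{bound} for uniform continuity of Birkhoff sums. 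Replace your bounded-distortion lemma with these and the argument closes along the lines you sketched.
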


We can  use the change of variables formula  to rewrite this in the equivalent form
$$
P(\textcolor{black}{G})
= \limsup_{n \to +\infty} \frac{1}{n}\log  \int_{f^nW_\delta^{u}(x)}
\exp\left( \sum_{k=1}^{n} G(f^{-k}y) \right)
  d\lambda_{f^nW_\delta^u(x)}(y). . \eqno(3.2)
 $$

\begin{example}
If we consider the potential $G = 0$ then Proposition \ref{pressure} gives
\begin{equation*}
P(0) =  \limsup_{n \rightarrow + \infty} \frac{\log \lambda(f^n W_{\delta}^u(x))}{n}.
\end{equation*}
Therefore, the topological entropy $h_{top}(f) = P(0)$ is the exponential growth rate of the volume of a small piece of unstable manifold.
\end{example}

\begin{example}
Another important potential is the geometric potential $G = \Phi = -  \log \left|\det ( Df|E_x^u)\right|$. Then Proposition \ref{pressure} gives, 
\begin{align*}
P(\Phi) & = \limsup_{n \to +\infty} \frac{1}{n}\log  \int_{W_\delta^{u}(x)}
 \exp\left( \sum_{k=0}^{n-1} (\Phi -\Phi)(f^ky) \right) 
 d\lambda_{W_\delta^u(x)}(y) \\
 & = \limsup_{n \to +\infty} \frac{1}{n}\log  \lambda(W_\delta^{u}(x))
	 = 0.
\end{align*}
This is a well known result in the uniformly hyperbolic setting (\cite{Bowen}, Theorem 4.11)
and we provide a new elementary proof of this fact for partially hyperbolic diffeomorphisms.
\end{example}

\begin{example}
More generally we can consider the geometric q-potential defined by, 
\begin{equation*}
\Phi_q = - q \log \left|\det ( Df|E_x^u)\right| \text{ \hspace{10mm} for $q \in \mathbb{R}$}.
\end{equation*}
Then the pressure has the following  characterisation,
\begin{equation*}
P(\Phi_q) = \limsup_{n \rightarrow \infty} \frac{1}{n} \log  \int_{W_\delta^u(x)} \left|\det (Df^n|E_z^u)\right|^{1-q}
  d\lambda_{W_\delta^u(x)}(z).
\end{equation*}
\end{example}

The proof of Proposition \ref{pressure} is analogous to the proof of Proposition 4.6 in \cite{PaPo}. The proof in \cite{PaPo} relies on the local product lemma and the requirement that points on local unstable manifolds contract in the past. We describe the adjustments to the partially hyperbolic setting using the local product structure in Lemma \ref{LPS} and the assumption that we have some control over the expansion of points under $f^n$ in the centre-stable direction.

 \begin{proof}[Proof of Proposition \ref{pressure}]
 We begin with the following standard result, which can be compared with Lemma 4.7 in \cite{PaPo}.

\begin{lemma}\label{cover0}
{F}or any $\epsilon_0 > 0$ there exists an $m > 0$ such that $f^mW^u_\delta(x)$ 
is $\epsilon_0$-dense in $X$.  In particular, we can assume that $X = \cup_{y \in f^mW^u_\delta(x)} W_{loc}^{cs}(y).$
\end{lemma}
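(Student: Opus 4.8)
The plan is to prove density in two stages: first show that the full forward orbit $\bigcup_{n\ge 0} f^n W^u_\delta(x)$ is dense in $X$, and then upgrade this, by a compactness argument, to $\epsilon_0$-density of a single image $f^m W^u_\delta(x)$; the final clause will then be read off from the local product structure of Theorem \ref{LPS}. Since $X$ is an attractor, $W^u_\delta(x)\subset X$ and all of its forward images stay in $X$, so the topological mixing of $f$ is available on $X$. Two geometric inputs will be used. First, uniform expansion in $E^u$ gives, for every sufficiently small $\delta_1>0$, an $m_0$ such that for all $m\ge m_0$ the disk $f^m W^u_\delta(x)$ contains a full local unstable manifold $W^u_{\delta_1}(y)$ around each of its points $y$ (preimages of a fixed-size disk contract). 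Second, fix $\epsilon>0$ as in Theorem \ref{LPS} and apply Lyapunov stability (Definition \ref{LS}) with $\epsilon$ replaced by $\epsilon_0/2$: this lets us choose $\delta_1$ so small that any curve contained in a local centre-stable manifold $W^{cs}_{loc}(z)$ of length at most $\delta_1$ has all of its forward images of length at most $\epsilon_0/2$. By Theorem \ref{LPS} the set $V:=\bigcup_{z\in W^u_\delta(x)} W^{cs}_{loc}(z)$ (with these small centre-stable pieces) contains a nonempty open subset $V_0$ of $X$, namely a neighbourhood of $x$.

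Next, cover $X$ by finitely many balls $B(z_i,\epsilon_0/2)$, $i=1,\dots,N$, with $z_i\in X$. Each $B(z_i,\epsilon_0/2)\cap X$ is nonempty and open in $X$, so topological mixing yields $M_i$ with $f^n(V_0)\cap B(z_i,\epsilon_0/2)\ne\emptyset$ for all $n\ge M_i$. Fix $m\ge\max(m_0,M_1,\dots,M_N)$. For each $i$ choose $w\in V_0\subset V$ with $f^m w\in B(z_i,\epsilon_0/2)$; writing $w\in W^{cs}_{loc}(z)$ with $z\in W^u_\delta(x)$ and joining $z$ to $w$ by a curve inside $W^{cs}_{loc}(z)$ of length at most $\delta_1$, the choice of $\delta_1$ gives $d(f^m z, f^m w)\le\epsilon_0/2$, hence $f^m z\in f^m W^u_\delta(x)$ lies in $B(z_i,\epsilon_0)$. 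As $i$ is arbitrary, $f^m W^u_\delta(x)$ is $\epsilon_0$-dense in $X$.

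For the final assertion, also require $\epsilon_0<\epsilon$. Given $w\in X$, choose $y\in f^m W^u_\delta(x)$ with $d(w,y)<\epsilon_0<\epsilon$. By Theorem \ref{LPS} the intersection $W^u_{\delta_1}(y)\cap W^{cs}_{loc}(w)$ is a single point $p$; since $m\ge m_0$ we have $W^u_{\delta_1}(y)\subset f^m W^u_\delta(x)$, so $p\in f^m W^u_\delta(x)$, and by coherence of the local centre-stable lamination $w\in W^{cs}_{loc}(p)$. Thus $X=\bigcup_{y\in f^m W^u_\delta(x)} W^{cs}_{loc}(y)$, after harmlessly enlarging the chosen size of the local centre-stable manifolds.

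The main obstacle is the density step. Topological mixing only speaks of nonempty open subsets of $X$, whereas $W^u_\delta(x)$ is in general a lower-dimensional disk, so it must first be ``thickened'' into an open subset of $X$ using the local product structure of Theorem \ref{LPS}, and one must then control what forward iteration does to the thickening: this is exactly where Lyapunov stability enters, guaranteeing that the centre-stable directions added in the thickening do not spoil the density estimate under $f^n$. The accompanying fact that forward images of the disk eventually contain uniform-size local unstable manifolds is standard, but it too relies on the uniform expansion hypothesis, and making both statements precise in the partially hyperbolic (rather than uniformly hyperbolic) setting is the delicate point.
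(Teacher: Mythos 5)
Your proof is correct and follows essentially the same approach as the paper: thicken $W^u_\delta(x)$ to an open subset of $X$ via the local product structure (you use a union of centre-stable leaves over $W^u_\delta(x)$; the paper uses a metric $\epsilon_1$-fattening $W^u_{\delta,\epsilon_1}$, but by Theorem~\ref{LPS} these are interchangeable), apply topological mixing of this open set against a finite $\epsilon_0$-net of $X$, and then use Lyapunov stability to push the resulting density from the thickened set back onto $f^mW^u_\delta(x)$ itself. For the final ``in particular'' clause you are slightly more explicit than the paper in exhibiting local unstable disks $W^u_{\delta_1}(y)$ inside $f^m W^u_\delta(x)$ so that the bracket $[y,w]$ from Theorem~\ref{LPS} actually lands in $f^m W^u_\delta(x)$; just note that the containment $W^u_{\delta_1}(y)\subset f^mW^u_\delta(x)$ fails for $y$ near the boundary of the image disk, so one should run the density argument on a slightly smaller disk $W^u_{\delta'}(x)$, $\delta'<\delta$, to keep the relevant $y$ uniformly interior (the paper's terser ``apply the local product structure again to conclude'' has the same implicit boundary issue).
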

\begin{proof} [Proof of Lemma \ref{cover0}]
For $\epsilon_1 > 0$ small enough, consider an $\epsilon_1$-fattening, $W_{\delta, \epsilon_1}^u$, of $W_{\delta}^u(x)$. By this we mean $W_{\delta, \epsilon_1}^u = \cup \{ B(y, \epsilon_1) \text{ : } y \in W_\delta^u(x)\}$. For any $z \in W_{\delta, \epsilon_1}^u$, using Lemma \ref{LPS} we have $W_\delta^u(x) \cap W_{loc}^{cs}(z)$ is a single point which we call $z_1$. Providing  $\epsilon_1 > 0$ is small enough then by Lyapunov stability we deduce that $d(f^m z_1, f^m z) < \epsilon_0/2$ for any $m \in \mathbb{N}$. Thus we have shown, for any $z \in W_{\delta, \epsilon_1}^u$, there exists a $z_1 \in W_\delta^u(x)$ such that $d(f^m z_1, f^m z) < \epsilon_0/2$ for any $m \in \mathbb{N}$.

For any $\epsilon_0 > 0$, by using compactness we can choose a finite set $C$ 
such that $\{B(c,\epsilon_0/4) \text{ : } c \in C\}$ forms a cover of $X$. Since $W_{\delta, \epsilon_1}^u$ is open, by topological mixing 
providing $m \in \mathbb N$ is sufficiently large, for 
 any $c \in C$ $f^m W_{\delta, \epsilon_1}^u \cap B(c, {\epsilon_0}/{4}) \neq \emptyset$.
 Let $y_c \in f^m W_{\delta, \epsilon_1}^u \cap B(c, {\epsilon_0}/{4})$ then for any $y \in X$ there is a $c \in C$ such that $y \in B(c,\epsilon_0/4)$ and in particular, $d(y, y_c) < \epsilon_0/2$. We have $f^{-m}y_c \in W_{\delta, \epsilon_1}^u$ and therefore by the preceeding paragraph there is a $z \in W_\delta^u(x)$ such that $d(y_c , f^m z) < \epsilon_0/2$.

By the triangle inequality, for any $y \in X$, we have that  there is a $z \in W_\delta^u(x)$ such that $d(y, f^m z) \leq d(y, y_c) + d(y_c,f^m z) < \epsilon_0$. Taking $\epsilon_0$ small enough (and making $\epsilon_1$ smaller if necessary), we can apply the local product structure (Lemma \ref{LPS}) again to conclude.
\end{proof}


To get a lower   bound on the growth rate in Proposition \ref{pressure} we proceed as follows.
 Given $\epsilon > 0$ and 
  $n \geq 1$  we want to 
construct an $(n, 2\epsilon)$-spanning set.  
We begin by choosing a covering of $ f^{n+m} W_\delta^u(x)$
by 
 $\epsilon$-balls 
$$
B_{d_u}(x_i,  \epsilon) \hbox{ : } i=1, \cdots, N:=N(n+m, \epsilon)$$
contained within the unstable manifold with respect to  the induced metric denoted by $d_u$
and let  $A_{\epsilon} : = f^{n+m} W_\delta^u(x) \setminus \bigcup_{y \in \partial f^{n+m} W_\delta^u(x)} B_{d_u}(y, \epsilon/2)$, where $\partial f^{n+m} W_\delta^u(x)$ is the boundary of $f^{n+m} W_\delta^u(x)$.
We can choose a maximal set 
$S= \{x_1, \cdots, x_{N(n+m,\epsilon)}\}$ with the additional property that 
$d_u(x_i,x_j) > \epsilon/2$ for $i \neq j$ and $x_i \in A_\epsilon$.
By our choice of $S$ we have that 
$$
A_{\epsilon} \subset \bigcup_{i=1}^{N(n+m,\epsilon)} B_{d_u}(x_i,  \epsilon/2).
$$
By the triangle inequality we have that 
$$
f^{n+m} W_\delta^u(x) \subset \bigcup_{i=1}^{N(n+m,\epsilon)} B_{d_u}(x_i,  \epsilon).
$$
Since by the additional property
$B_{d_u}(x_i, \epsilon/4)\cap B_{d_u}(x_j, \epsilon/4) = \emptyset$ for $i \neq j$
we have that the disjoint union satisfies 
$$
\bigcup_{i=1}^{N(n+m,\epsilon)} B_{d_u}(x_i,  \epsilon/4) \subset f^{n+m} W_\delta^u(x).
$$




We assume without loss of generality that 
$$f^n: f^{m}W^u_\delta(x)
\to f^{n+m}W^u_\delta(x)$$
 locally expands distance along the unstable 
 manifold (which is achieved by our choice of the Riemannian metric being adapted in Definition \ref{parthyp}).
 In particular, we will use the local expansion to show that the primages $y_i := f^{-n} x_i \in  f^m(W_\delta^u(x))$ ($i=1, \cdots, N$)  form an $(n, 2 \epsilon)$-spanning set.

By Lemma \ref{cover0},  for  any point  $z\in \textcolor{black}{X}$  we can choose a point
$y \in f^m(W^u_\delta(x))$ with $z \in W^{cs}_{loc}(y)$ 
and $d(y,z) < \epsilon_0$. Then by Lyapunov stability  for $\epsilon_0>0$ sufficiently small we have $d(f^jz, f^jy) < \epsilon$ for $0\leq j \leq n$.
We can then choose a $y_i$ such that $\textcolor{black}{d_n(y,y_i)} < \epsilon$ since $f^n$ is locally expanding along unstable manifolds.
In particular,  by the triangle inequality
$$d(f^j z, f^jy_i)  \leq d(f^j z, f^jy)+d(f^j y, f^jy_i) \leq 2\epsilon$$
 for $0\leq j \leq n$. Therefore, $\{y_1, \dots, y_{N(n+m,\epsilon)}\}$ is an $(n,2\epsilon)$-spanning set.

Since $G$ is continuous  we have the following bound (cf. Lemma 4.9 in \cite{PaPo}).

\begin{lemma}\label{bound}
For all  $\tau> 0$ there exists $\epsilon > 0$  sufficiently small such that 
 for all $n \geq 1$ and 
points $y_i, z\in X$ satisfying  $d(f^jy_i, f^jz) \leq \epsilon$ for $0 \leq j \leq n-1$
we have $|G^n(y_i) - G^n(z)| \leq n \tau$.
\end{lemma}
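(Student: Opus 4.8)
The plan is to prove Lemma~\ref{bound} by a routine uniform continuity argument, exploiting compactness of $X$ and continuity of $G$. Since $X$ is compact and $G$ is continuous, $G$ is uniformly continuous on $X$; hence given $\tau > 0$ there exists $\epsilon > 0$ such that $d(p,q) \le \epsilon$ implies $|G(p) - G(q)| \le \tau$ for all $p, q \in X$.

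With this $\epsilon$ fixed, I would simply compare the two Birkhoff sums term by term. Suppose $y_i, z \in X$ satisfy $d(f^j y_i, f^j z) \le \epsilon$ for $0 \le j \le n-1$. Then by the choice of $\epsilon$ we have $|G(f^j y_i) - G(f^j z)| \le \tau$ for each such $j$, and therefore
$$
|G^n(y_i) - G^n(z)| = \left| \sum_{j=0}^{n-1} \bigl( G(f^j y_i) - G(f^j z) \bigr) \right| \le \sum_{j=0}^{n-1} |G(f^j y_i) - G(f^j z)| \le n\tau,
$$
which is exactly the claimed bound.

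There is essentially no main obstacle here; the statement is a packaging of uniform continuity plus the triangle inequality, and the only thing to be careful about is that the same $\epsilon$ works for all $n$ simultaneously — which it does, because $\epsilon$ is chosen from uniform continuity of $G$ on $X$ and does not depend on $n$ at all. This uniformity is precisely what makes the lemma useful when combined with the spanning-set construction preceding it: the per-step error $\tau$ accumulates only linearly in $n$, so after dividing by $n$ in the pressure limit it contributes an arbitrarily small constant. I would note explicitly that the same reasoning applies verbatim to $\Phi$ in place of $G$ (since $\Phi$ is continuous on $X$ as well), which is how the lemma gets applied to the combined potential $G - \Phi$ in the proof of Proposition~\ref{pressure}.
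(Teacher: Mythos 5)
Your argument is correct and is precisely the uniform-continuity-plus-triangle-inequality proof that the paper has in mind (the paper simply cites Lemma 4.9 of \cite{PaPo} after noting that $G$ is continuous). Nothing more is needed, and your observation that $\epsilon$ depends only on $\tau$ and not on $n$ is exactly the point that makes the lemma useful in the pressure estimates.
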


It remains to relate $Z_0(n, 2\epsilon)$ to an integral over 
$f^{n+m}W_\delta^u(x)$. By the properties of our choice of 
$\epsilon$-cover for $f^{n+m}W_\delta^u(x)$
we  have that for all $n \geq 1$
$$
\begin{aligned} 
Z_0(n,2\epsilon) &\leq \sum_{i=1}^N \exp(G^n(y_i)) 
\cr
&\leq
\sum_{i=1}^N 
\frac{1}{\lambda(B_{d_u}(x_i,\epsilon/4))}
\int_{B_{d_u}(x_i,\epsilon/4)} \exp(G^n(f^{-n}x_i)) d\lambda(\textcolor{black}{z})\cr
&\leq 
\frac{1}{M} e^{n \tau}  \int_{f^{n+m}W_\delta^u(x)}  e^{G^n(f^{-n}\textcolor{black}{z})}d\lambda(\textcolor{black}{z}) 
\cr
\end{aligned} \eqno{(3.2)}
$$
where $M = M(\epsilon) = \inf_{z} \lambda(B_{d_u}(z,\epsilon/4))>0$.
Finally,    we can bound 
$$ 
\int_{f^{n+m}W_\delta^u(x)}  e^{G^n(f^{-n}\textcolor{black}{z})}d\lambda(\textcolor{black}{z})
\leq 
e^{m \|G\|_\infty}
\int_{f^{n+m}W_\delta^u(x)}  e^{G^{n+m}(f^{-(n+m)}\textcolor{black}{z})}d\lambda(\textcolor{black}{z}). \eqno{(3.3)}
$$
Comparing equations (3.2) and (3.3), we see that 
$$
P(G) = 
\lim_{\epsilon \to  0}
\limsup_{n \to +\infty}
\frac{1}{n} \log 
Z_0(n, 2\epsilon) \leq 
\limsup_{n \to +\infty}\frac{1}{n} \log \int_{f^{n}W_\delta^u(x)}  e^{G^n(f^{-n}\textcolor{black}{z})}
d\lambda(\textcolor{black}{z}) +\tau.
$$
Since $\tau>0$ can be chosen arbitrarily small the lower bound follows.

To prove the reverse inequality in (3.2)
given $\epsilon > 0$ and 
  $n \geq 1$ then we want to
create an $(n,\kappa \epsilon)$-separated  set for some constant $\kappa > 0$.  
To this end, we  can  choose a maximal number of points $x_i \in \textcolor{black}{f^n}W_\delta^u(x)$ $(i = 1, \cdots, N= N(n, \epsilon))$
so that $d_u(x_i, x_j) > \epsilon$ whenever $i \neq j$. 
 We can again assume without loss of generality that 
 $f^n: W^u_\delta(x) \to f^nW^u_\delta(x)$ is locally  distance expanding and thus, in particular, the points
 $y_i = f^{-n}x_i$ 
 ($i=1, \cdots, N = N(n, \epsilon)$)
 form an $(n, \kappa \epsilon)$-separated set, for some $\kappa > 0$ independent of $n$ and $\epsilon$. 
 
The balls $B_{d_u}(x_i,  \epsilon)$ ($i=1, \cdots, N=N(n, \epsilon)$) form a cover for $f^nW^u_\delta(x)$, since otherwise we could choose an extra point  $z \in f^nW^u_\delta(x)$  with   $\inf_i\{d(z, x_i)\} \geq \epsilon$ contradicting the maximality of the $x_i$'s.
We  can therefore use Lemma 
 \ref{bound} to
 bound
 $$
\begin{aligned}
Z_1(n,\textcolor{black}{\kappa} \epsilon) &\geq \sum_{i=1}^N
\frac{e^{-n\tau}}{\lambda(B_{d_u}(x_i, \epsilon))}
 \int_{B_{d_u}(x_i,\epsilon)}  \exp({G^n(f^{-n}z))}d\lambda(z)\cr
&\geq 
\frac{e^{-n\tau }}{L}\int_{f^{n}W_\delta^u(x)}  e^{G^n(f^{-n}\textcolor{black}{z})}d\lambda(\textcolor{black}{z}).  
\end{aligned}
$$
 where  $L = L(\epsilon)= \sup_z  \lambda(B_{d_u}(z, \epsilon)) > 0$.
In particular,  we see that 
$$
P(G) = 
\lim_{\epsilon \to 0}
\limsup_{n \to +\infty}
\frac{1}{n} \log 
Z_1(n, \kappa \epsilon) \geq 
\limsup_{n \to +\infty}\frac{1}{n} \log \int_{f^{n}W_\delta^u(x)}  e^{G^n(f^{-n}\textcolor{black}{z})}d\lambda(\textcolor{black}{z}) - \tau.
$$
Since $\tau > 0$ is arbitrary 
this inequality completes the proof of Proposition \ref{pressure}.
 \end{proof}


\begin{example} \label{example}
If we drop the Lyapunov stability assumption and have no condition on the growth rate of the centre-stable direction then Proposition \ref{pressure} may no longer hold. Let $A \in SL(4,\mathbb{Z})$ and $T_A : \mathbb{T}^4 \rightarrow \mathbb{T}^4$ be the associated diffeomorphism. We consider the product of two Anosov diffeomorphisms on $\mathbb{T}^2$ using the matrix
\begin{equation*}
A = \begin{pmatrix}
3 & 2 &  0 & 0\\
1 & 1 & 0 & 0\\
0 & 0 & 2 & 1 \\
0& 0& 1& 1
\end{pmatrix} 
\end{equation*}
We have eigenvalues $\lambda_1 = 2+ \sqrt{3}$, $\lambda_2 = \frac{3 + \sqrt{5}}{2}$, $\lambda_3 = \frac{3 - \sqrt{5}}{2}$ and $\lambda_4 = 2 - \sqrt{3}$.

We can 
 take $E^u$ to be the direction of the eigenvector corresponding to $\lambda_1$ and $E^{cs}$ to be in the eigenspace generated by the eigenvectors corresponding to $\lambda_{2},$ $\lambda_{3},$ $\lambda_{4}$ to obtain a partially hyperbolic diffeomorphism. Taking $G = 0$ then if Proposition \ref{pressure} applied, it would imply the entropy should be $\log \lambda_1$ when it is clear from the definition that the entropy is the sum of the logarithm of the eigenvalues greater than 1, namely, $\log \lambda_1 + \log \lambda_2$.
\end{example}

\section{Proof of Theorem \ref{thm:gibbsdiffeo_intro}}

In this section we discuss the main result, constructing equilbrium states using pullbacks of measures supported on small pieces of unstable manifold. 




\subsection{Proof of Theorem \ref{thm:gibbsdiffeo_intro}}

The proof of Theorem \ref{thm:gibbsdiffeo_intro} relies on the growth rate result, Proposition \ref{pressure}.


\begin{proof}[Proof of Theorem 1.1]
By Alaoglu's theorem on the weak star compactness of the 
space of $f$-invariant
probability measures
we can find an $f$-invariant  probability measure, which we denote by   $\mu$,  and 
a  subsequence $n_k$ such that 
the measures, $\mu_n$ in equation (1.3), have a weak star convergent subsequence with 
$\lim_{k \rightarrow \infty} \mu_{n_k} = \mu$.
Moreover, for  any continuous $F: X \to \mathbb R$ we can compare 
$$
\begin{aligned}
\left|\int F d\mu_n - \int F\circ  f d\mu_n\right| &= 
\left|\frac{1}{n}\sum_{k=0}^{n-1} \int F\circ f^k d\lambda_n
- \frac{1}{n}\sum_{k=0}^{n-1} \int F\circ f^{k+1} d\lambda_{n}\right|\cr
&\leq \frac{2\|F\|_\infty}{n} \to 0 \hbox{ as } n \to +\infty\cr
\end{aligned}
$$
and, in particular,  one easily sees that $\mu$ is $f$-invariant.

For convenience we denote
$$
\begin{aligned}
Z_n^G 
&= \int_{W_\delta^u(x)} \exp \left( 
\sum_{k=0}^{n-1} (G-\Phi)(f^ky)
\right)d\lambda_{W_\delta^u(x)}(y)\cr
&
=  \int_{f^nW_\delta^u(x)} \exp \left( 
\sum_{k=1}^{n} G(f^{-k}y)
\right)d\lambda_{f^nW_\delta^u(x)}(y).
\cr
\end{aligned}
$$
We want to show that
$\mu$ is an equilibrium state for $G$.

\begin{definition}
Given a finite partition $\mathcal P = \{P_i\}_{i=1}^N$ we say that it has size $\epsilon > 0$ if $\sup_{i}\left\{ \hbox{\rm diam}(P_i)  \right\}< \epsilon$.
\end{definition}

\textcolor{black}{By Lemma \ref{bound}}, for  any $\tau>0$ and $n \geq 1$ we can choose a partition $\mathcal P$  of size 
$\epsilon$  such that  for all  $x,y \in A \in \vee_{i=0}^{n-1}
T^{-i}\mathcal P$ we have that
$$\left|\sum_{k=0}^{n-1} G(f^{k}x) - \sum_{k=0}^{n-1} G(f^{k}y)\right| \leq n \tau. \eqno(4.1)$$

Proceeding with the proof of  Theorem \ref{thm:gibbsdiffeo_intro}, 
for each $A \in  \bigvee_{h=0}^{n-1} f^{-h} \mathcal P$ we can fix a choice of \textcolor{black}{an} $x_A \in A$.
By definition of $\lambda_n$, for each $0 \leq j \leq n-1$:
\begin{equation*}
\int_{f^jW^u_\delta(x)} G d (\textcolor{black}{f^j_*}\lambda_{n}) = \frac{1}{Z_n^G}  \int_{f^n(W^u_\delta(x))} 
\exp\left({\sum_{k=1}^{n} G(f^{-k}y)}\right)G( f^{-(n-j)}y)  d\lambda_{f^nW^u_\delta}(y).
\end{equation*}
Hence,
using  the definition of $\mu_n$  in (1.\textcolor{black}{3}) and lower bound in equation (4.1) we have that,
$$
\int G(y) d\mu_{n}(y)
						 \geq \frac{1}{nZ_n^G} \sum_{A \in \bigvee_{h=0}^{n-1} f^{-h}\mathcal P} 
			\left( \sum_{j=0}^{n-1}G( f^{j}x_A) - n\tau \right)    \int_{f^n(A \cap W^u_\delta(x))} \exp\left({\sum_{k=1}^n G(f^{-k}y)}\right)  d\lambda_{f^nW^u_\delta}(y).	\eqno(4.2)
$$
We can write for a Borel set $A \subset X$:
$$
\lambda_n(A) = \int_{f^nW^u_\delta(x)} 
\exp\left({\sum_{k=1}^{n}G(f^{-k}y)}\right)
\chi_{f^nA}(y) d\lambda_{f^nW^u_\delta(x)} (y).
$$
Using Lemma \ref{bound} again we have
$$
\begin{aligned}
\log \lambda_n(A) 				&\leq \sum_{k=0}^{n-1}G(f^{k}x_A) + n\tau  + \log \lambda_{f^n W^u_\delta(x))}(f^n(A))
\cr
&\leq \sum_{k=0}^{n-1}G(f^{k}x_A) + n \tau 				
\end{aligned}
\eqno(4.3)
$$
where  in  the last inequality we use that the diameters of elements in the partition are  arbitrarily small so that
  $\log \lambda_{f^n W^u_\delta(x))}(f^n(A))$ is negative (cf. Theorem 1.2 in \cite{PaPo}).

Letting $K_{n,A} =   \int_{f^n(A \cap W^u_\delta(x))} 
\exp\left({\sum_{k=1}^{n}G(f^{-k}y)}\right)
 d\lambda_{f^n W^u_\delta(x)} $
 we can 
 consider the entropy
$$
\begin{aligned}
H_{\lambda_{n}} \bigg(\bigvee_{h=0}^{n-1} f^{-h}\mathcal{P} \bigg) &
 = - \sum_{A \in \bigvee_{h=0}^{n-1} f^{-h}\mathcal{P}} \lambda_{n}(A) \log \lambda_{n}(A) \\
			& = - \sum_{A \in \bigvee_{h=0}^{n-1} f^{-h}\mathcal P} \frac{K_{n,A}}{Z_n^G} \log \frac{K_{n,A}}{Z_n^G} \\
			& = \log Z_n^G - \sum_{A \in \bigvee_{h=0}^{n-1} f^{-h}\mathcal P} \frac{K_{n,A}}{Z_n^G} \log{K_{n,A}}, 
\end{aligned}
\eqno(4.4)
$$
where the last equality uses
$\sum_{A \in \bigvee_{h=0}^{n-1}f^{-h}\mathcal P} K_{n,A} =  Z_n^G.$
Therefore,  comparing  (4.3) and  (4.4) gives
$$
H_{\lambda_{n}} \bigg(\bigvee_{h=0}^{n-1} f^{-h}\mathcal P \bigg) \geq \log Z_n^G  - \sum_{A \in \bigvee_{h=0}^{n-1} f^{-h}\mathcal P} \frac{K_{n,A}}{Z_n^G}
\left(\sum_{k=0}^{n-1}G(f^{k}x_A)+  n \tau\right).
\eqno(4.5)
$$
By  (4.2) we can also bound
$$
n \int_X G d \mu_{n} \geq 
  \frac{1}{Z_n^G} \sum_{A \in \bigvee_{h=1}^{n} f^{-h}\mathcal P} \left(\sum_{k=1}^nG(f^{k}x_A) -  n \tau\right) K_{n,A}.
  \eqno(4.6)
$$
Comparing (4.5) and (4.6) we can write
$$
\begin{aligned}
&H_{\lambda_{n}} \bigg(\bigvee_{h=0}^{n-1} f^{-h}\mathcal P \bigg) + n \int_X G d \mu_{n} 
\cr& \geq \log Z_n^G - \sum_{A \in \bigvee_{h=0}^{n-1} f^{-h}\mathcal{P}} \frac{K_{n,A}}{Z_n^G}\left(\sum_{k=0}^{n-1}G(f^{k}x_A) 
+ n \tau\right)\cr
&\qquad\qquad\qquad
+ \frac{1}{Z_n^G} \sum_{A \in \bigvee_{h=0}^{n-1} f^{-h}\mathcal P} \left(\sum_{k=0}^{n-1}G(f^{k}x_A)-  n \tau\right) K_{n,A} \\
			& \geq \log Z_n^G  - 2 n \tau.
\end{aligned}
\eqno(4.7)
$$
We can use (4.7) and Lemma \ref{Misent} to write, for $0 < q < n$,
$$
\begin{aligned}
q \log Z_n^G - q n \int_X G d \mu_{n}-2q n \tau & \textcolor{black}{\leq}
q H_{\lambda_{n}} \bigg(\bigvee_{h=0}^{n-1} f^{-h}\mathcal{P} \bigg) , \\ 
				&	 \leq n H_{\mu_{n}} \bigg(\bigvee_{i=0}^{q-1} f^{-i}\mathcal{P} \bigg) + 2q^2|\mathcal{P}|, 
\end{aligned}
$$
which we can rearrange to get
$$
\begin{aligned}				
\frac{\log Z_n^G}{n} - {2\tau} - \frac{2q|\mathcal{P}|}{n} & \leq \frac{H_{\mu_{n}} \bigg(\bigvee_{i=0}^{q-1} f^{-i}\mathcal{P} \bigg)}{q} + \int_X G d \mu_{n}.
\end{aligned}
$$
Letting $n_k \to +\infty$ gives that 
\begin{align*}
P(G) & = \lim_{k \rightarrow \infty} \frac{\log Z_{n_k}^G}{n_k}\cr
& \leq \lim_{k \rightarrow \infty} \left(\frac{H_{\mu_{n_k}}\bigg(\bigvee_{i=0}^{q-1} f^{-i}\mathcal{P} \bigg)}{q} + \int_X G d \mu_{n_k}\right) + 2 \tau \\
			& =  \frac{H_{\mu}\bigg(\bigvee_{i=0}^{q-1} f^{-i}\mathcal{P} \bigg)}{q} + \int_X G d \mu + 2\tau,
\end{align*}
where we assume without loss of generality that the boundaries of the partition have zero measure.
Letting  $q \rightarrow \infty$,
$$
\begin{aligned}
P(G) \leq \textcolor{black}{h_{\mu}(\mathcal P)} + \int_X G d \mu + \textcolor{black}{2}\tau.
\end{aligned}\eqno(4.8)
$$
Finally, we recall that  $\tau$ is arbitrary.
Therefore, since $\mu$ is an $f$-invariant probability measure 
we see from the variational principle (1.1) that the inequalities in (4.8) are actually equalities 
(since \textcolor{black}{$h_{\mu}(\mathcal P)  \leq  h({\mu})$})
and therefore  we conclude  that the
measure  $\mu$ is an equilibrium state for $G$.

\end{proof}

\begin{remark}
If an equilibrium state corresponding to a potential $G$ were known to be unique then we would  have that $\mu_n \to \mu_G$
as  $n \to +\infty$.
\end{remark}

\section{Examples}
We can consider some specific examples.

\begin{example}[Measure of Maximal Entropy] \label{mme}
In  the special case where $G = 0$, the weak* limit points are measures of maximal entropy, $\mu_{MME}$. 
In particular, (1.1) now reduces to  $P(0) = h_{top}(f)$, the topological entropy.
Furthermore,  the sequence of densities  $(\lambda_n)_{n=1}^\infty$
 is given by
$$
\begin{aligned}
\frac{d\lambda_n}{d\lambda}(y) &:=  
\frac{
 \exp\left( -\sum_{i=0}^{n-1} \textcolor{black}{\Phi}(f^iy) \right) 
 }{
 \int_{W_\delta^u(x)}
 \exp\left( -\sum_{i=0}^{n-1} \textcolor{black}{\Phi}(f^iz) \right) 
 d\lambda_{W_\delta^u(x)}(z)
 }\cr
&=
\frac{
\left|\det ( Df^n|E_y^u)\right|
 }{
 \int_{W_\delta^u(x)}
\left|\det (Df^n|E_z^u)\right|
 d\lambda_{W_\delta^u(x)}(z)
 }
 \hbox{ for  $y \in  W^u_\delta(x)$.}
\cr
\end{aligned}
$$
The averages become
$$
\mu_n = 
\frac{1}{n}\sum_{k=0}^{n-1} f_*^k
\left(\frac{
\left|\det ( Df^n|E_y^u)\right|
 }{
 \int_{W_\delta^u(x)}
\left|\det (Df^n|E_z^u)\right|
  d\lambda_{W_\delta^u(x)}(z)
 }  \lambda \right), \quad n \geq 1,
$$ 
and 
the weak* limit points are measures of maximal entropy.
\end{example}


\begin{example}[u-Gibbs measure] \label{u-Gibbs}
If we consider the geometric potential $G = \Phi = -  \log \left|\det ( Df|E_x^u)\right|$ then Theorem \ref{thm:gibbsdiffeo_intro} also shows that $u$-Gibbs measures are equilibrium states for the geometric potential. The weights in Theorem \ref{thm:gibbsdiffeo_intro} reduce to
$$
\frac{d\lambda_n}{d\lambda}(y) =  \exp\left( \sum_{i=0}^{n-1} (G - \Phi)(f^iy)\right)
 =1
\hbox{ for all $y \in W^u_\delta(x)$}
 $$
so
$\lambda_n = \lambda$, for all $n \geq 1$.  In particular, 
$$
\mu_n = 
\frac{1}{n}\sum_{k=0}^{n-1} f_*^k \lambda_n = \frac{1}{n}\sum_{k=0}^{n-1} f_*^k \lambda, 
$$
for $n \geq 1$. The weak* limit points of $\mu_n$ are u-Gibbs measures.
\end{example}

\begin{example}
We can apply Theorem \ref{thm:gibbsdiffeo_intro} to the geometric q-potential,
\begin{equation*}
\Phi_q = - q \log \left|\det ( Df|E_x^u)\right| \text{ \hspace{5mm} for $q \in \mathbb{R}$}.
\end{equation*}
The sequence of densities are then 
$$
\begin{aligned}
\frac{d\lambda_n}{d\lambda}(y) &:= 
\frac{
\left|\det ( Df^n|E_y^u)\right|^{1-q}
 }{
 \int_{W_\delta^u(x)}
\left|\det (Df^n|E_z^u)\right|^{1-q}
 d\lambda_{W_\delta^u(x)}(z)
 }
 \hbox{ for  $y \in  W^u_\delta(x)$.}
\cr
\end{aligned}
$$
The averages become
$$
\mu_n = 
\frac{1}{n}\sum_{k=0}^{n-1} f_*^k
\left(\frac{
\left|\det ( Df^n|E_y^u)\right|^{1-q}
 }{
 \int_{W_\delta^u(x)}
\left|\det (Df^n|E_z^u)\right|^{1-q}
  d\lambda_{W_\delta^u(x)}(z)
 }  \lambda \right), \quad n \geq 1.
$$ 
Notice that if $q = 0$ we recover Example \ref{mme} and $q = 1$ reduces to Example \ref{u-Gibbs}.
\end{example}

\section{Partially Hyperbolic Diffeomorphisms with Subexponential Contraction in Centre-Unstable Manifolds} \label{PHDwSCiCUM}

In this section we explore constructions of equilibrium states for partially hyperbolic diffeomorphisms with a stable, centre-unstable splitting. Our motivation is to weaken the Lyapunov stability condition in Theorem \ref{thm:gibbsdiffeo_intro} in this context. Again we construct equilibrium states using a suitable sequence of reference measures but this time supported on a local centre-unstable manifold. We still require an additional condition, this time restricting the contraction of orbits in centre-unstable manifolds

\begin{definition}[Partially hyperbolic set with stable, centre-unstable splitting] \label{parthyp_cu}
Let $M$ be a compact Riemannian manifold and $f : M \rightarrow M$ be an attracting $C^1$ diffeomorphism. A closed $f$-invariant subset $X \subset M$ is said to be partially hyperbolic if:
\begin{itemize}
\item there exists a continuous splitting of the tangent bundle into subbundles $E^{cu}$ and $E^{s}$ such that $T_{X}M = E^{cu} \oplus E^{s}$; and
\item there is a Riemannian metric $|| \cdot ||$ on $M$, and constants $0 < \lambda_1 < \lambda_2$ with $0 < \lambda_1 < 1$ such that, for every $x \in X$,
\begin{align*}
|| Df_xv|| & \leq \lambda_1 ||v|| \hbox{ \hspace{5mm} for $v \in E_x^s$}, \\
|| Df_xv|| & \geq \lambda_2 ||v|| \hbox{ \hspace{5mm} for $v \in E_x^{cu}$}.
\end{align*}
\end{itemize}
\end{definition}


We have the following analogue of Lemma \ref{stab} in the case of partially hyperbolic diffeomorphisms with a stable, centre-unstable splitting.


\begin{thm}[Shub \cite{shub}] \label{smtpart}
    For any $x \in X$ and for $\delta > 0$ sufficiently small, there are two $C^1$ embedded discs, $W_\delta^s$ and $W_\delta^{cu}$ tangent to $E_x^s$ and $E_x^{cu}$ respectively and satisfying,
    \begin{enumerate}
        \item Choose $\rho > 0$ such that $\lambda_1 < \rho < \lambda_2$. Then the $\delta$-local stable manifold is given by, 
        \begin{equation*}
            W_\delta^s(x) = \{y \in M \hbox{ : } d(f^{n}x, f^{n}y) \leq \delta, \forall n \geq 0 \hbox{ and } d(f^{n}x, f^{n}y) / \rho^n \rightarrow 0 \hbox{ as }n \rightarrow \infty \};
        \end{equation*}
        \item $f(W_\delta^s(x)) \subset W_\delta^s(f(x))$ and $f$ contracts distances by a constant close to $\lambda_1$;
        \item $f(W_\delta^{cu}(x)) \cap B(x,\delta) \subset W_\delta^{cu}(f(x))$.
    \end{enumerate}
\end{thm}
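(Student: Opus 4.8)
The plan is to establish this by the classical graph-transform argument for (pseudo-)hyperbolic splittings, in the form given by Shub \cite{shub}, adapted to the splitting $E^{cu}\oplus E^s$ of Definition \ref{parthyp_cu}; we only sketch the mechanism. First I would fix the adapted metric and pass to exponential charts: by compactness of $X$ and since $f$ is $C^1$, for every $\eta>0$ one can choose $\delta>0$ so that the localised maps $F_x:=\exp_{fx}^{-1}\circ f\circ\exp_x$ satisfy $F_x(0)=0$, $DF_x(0)=Df_x$ and $\sup_{\|v\|\le\delta}\|DF_x(v)-Df_x\|\le\eta$ uniformly in $x\in X$. Because $Df_x$ preserves the splitting, $DF_x(0)=A_x\oplus B_x$ is block diagonal for $E^s_x\oplus E^{cu}_x\to E^s_{fx}\oplus E^{cu}_{fx}$ with $\|A_x\|\le\lambda_1$ and $\|B_x^{-1}\|\le\lambda_2^{-1}$; since $\lambda_1<\lambda_2$, after shrinking $\eta$ the standard stable and centre-unstable cone fields $K^s$, $K^{cu}$ around $E^s$, $E^{cu}$ are forward-invariant (in the appropriate sense) with contraction/expansion factors close to $\lambda_1$ and $\lambda_2$.

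Next I would construct $W^s_\delta$ as the fixed point of a graph transform. Consider the complete metric space $\mathcal S$ of families $\sigma=(\sigma_x)_{x\in X}$ of maps $\sigma_x:E^s_x(\delta)\to E^{cu}_x$ with $\sigma_x(0)=0$ and $\mathrm{Lip}(\sigma_x)\le 1$, equipped with the $C^0$ distance. Define $\Gamma\sigma$ by letting $\mathrm{graph}((\Gamma\sigma)_{fx})$ be the component through the origin of $F_x(\mathrm{graph}(\sigma_x))$ inside the chart at $fx$; the cone estimate shows this is again the graph of a map with Lipschitz constant $\le 1$, so $\Gamma:\mathcal S\to\mathcal S$ is well defined, and a routine estimate gives that $\Gamma$ contracts the $C^0$ distance by a factor close to $\lambda_1/\lambda_2<1$. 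The contraction mapping theorem then yields a unique fixed $\sigma^{\ast}$, and I would set $W^s_\delta(x)=\exp_x(\mathrm{graph}(\sigma^{\ast}_x))$; block-diagonality of $DF_x(0)$ forces $D\sigma^{\ast}_x(0)=0$, giving $T_xW^s_\delta(x)=E^s_x$, and the $C^1$ dependence on the base point (hence the $C^1$ embedded disc) follows from the fibre-contraction/section argument of \cite{shub}. Items (1) and (2) then come from the cone estimates: for $y\in W^s_\delta(x)$ the arc joining $x$ to $y$ in $W^s_\delta(x)$ is tangent to $K^s$, so $d(f^nx,f^ny)\le C(\lambda_1+\eta)^n d(x,y)$, which stays $\le\delta$ and decays faster than $\rho^n$ because $\lambda_1<\rho$; conversely, a point $y$ off $W^s_\delta(x)$ has a nonzero component in $K^{cu}$ in the chart, whose $K^{cu}$-expansion forces $f^ny$ either to leave $B(f^nx,\delta)$ or to satisfy $d(f^nx,f^ny)\ge c(\lambda_2-\eta)^n$, contradicting (since $\rho<\lambda_2$) the defining condition, so the dynamically defined set is exactly $W^s_\delta(x)$.

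Finally, $W^{cu}_\delta$ and item (3) I would obtain by running the identical construction for $f^{-1}$, with the roles of $E^s$ and $E^{cu}$ exchanged: for $f^{-1}$ one has $\|Df^{-1}|_{E^{cu}}\|\le\lambda_2^{-1}$, $\|Df^{-1}|_{E^s}\|\ge\lambda_1^{-1}$ and $\lambda_2^{-1}<\lambda_1^{-1}$, so $E^{cu}$ now plays the role of the dominated bundle and the graph transform over $E^{cu}$ produces the $C^1$ disc $W^{cu}_\delta(x)$ tangent to $E^{cu}_x$. I expect the main obstacle, which is really the only point where this differs from the uniformly hyperbolic case, to be the absence of any expansion along $E^{cu}$ (we only know $\lambda_2>\lambda_1$, not $\lambda_2>1$): this is precisely why $W^{cu}_\delta$ is asserted only to be \emph{locally} invariant, $f(W^{cu}_\delta(x))\cap B(x,\delta)\subset W^{cu}_\delta(f(x))$, rather than genuinely invariant, and why the converse inclusion in item (1) has to be stated with the normalisation by $\rho^n$ for a fixed $\rho\in(\lambda_1,\lambda_2)$; one must keep this $\rho$ fixed throughout the cone estimates. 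All remaining steps are the standard graph-transform bookkeeping, for which I would refer to \cite{shub}.
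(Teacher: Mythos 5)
The paper gives no proof of Theorem~\ref{smtpart} --- it is quoted directly from Shub's book \cite{shub} --- and your sketch is a faithful reconstruction of the graph-transform argument from that source, correctly noting that the domination $\lambda_1<\lambda_2$ (without requiring $\lambda_2>1$) is what drives the contraction of the graph transform and that the absence of genuine expansion along $E^{cu}$ is precisely why item~(3) only asserts local invariance. So the proposal is correct and relies on the same route the paper itself invokes by citation.
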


Our results require an additional condition which is less restrictive than Lyapunov stability. Heuristically, we assume that the contraction of distances in the centre-unstable manifold needs to be subexponential.

\begin{definition}[Subexponential contraction in the centre-unstable manifold] \label{secicum}
    A partially hyperbolic diffeomorphism with stable, centre-unstable splitting $f : X \rightarrow X$ satisfies \emph{subexponential contraction} in the centre-unstable manifold if there exists an increasing $g : \mathbb{N} \rightarrow \mathbb{R}_{\geq 1}$ such that for $x \in X$, $y, z \in W_\delta^{cu}(x)$ and $n \in \mathbb{N}$
    \begin{equation*}
        d(f^{n}y, f^{n}z) > \frac{1}{g(n)}d(y,z),
    \end{equation*}
    with 
    \begin{equation*}
        \limsup_{n \rightarrow \infty} \frac{1}{n} \log g(n) = 0.
    \end{equation*}
\end{definition}


In Remark \ref{counter} we will discuss how  Example \ref{example} is again a counterexample to our results if we do not assume the property in Definition \ref{secicum}.

\subsection{Growth of centre-unstable manifolds} \label{GoC-UM}

As in Proposition \ref{pressure} we need the following characterisation of the pressure in terms of growth rates of appropriately weighted centre-unstable manifolds. 

\begin{proposition}\label{pressure_cu}
Let $f: X \to X$ be a mixing partially hyperbolic attracting diffeomorphism with subexponential contraction in centre-unstable manifolds. For any continuous function $G: X \to \mathbb R$, $x \in X$ and $\delta > 0$ sufficiently small, we have 
\begin{equation} \label{growtheqpart}
P(G) = \limsup_{n \to +\infty} \frac{1}{n}\log  \int_{f^nW_\delta^{cu}(x)} e^{S_n G(f^{-n}y)}  d\lambda_{f^nW_\delta^{cu}(x)}(y). 
\end{equation}
\end{proposition}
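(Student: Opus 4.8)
The plan is to follow the template of the proof of Proposition \ref{pressure} (itself adapted from Proposition~4.6 of \cite{PaPo}), with three substitutions. The local product structure $W^u_\delta\times W^{cs}_{loc}$ of Theorem \ref{LPS} is replaced by the transverse intersection $[y,z]\in W^{cu}_\delta(y)\cap W^s_{loc}(z)$; since $E^{cu}\oplus E^s=T_XM$ the two families of discs produced by Theorem \ref{smtpart} are transverse, so one first records (as the $E^{cu}\oplus E^s$ analogue of the last assertion of Theorem \ref{LPS}) that $W^{cu}_\delta(y)\cap W^s_{loc}(z)$ is a single point whenever $d(y,z)$ is small. The role of Lyapunov stability in controlling forward orbits is played instead by the exponential forward contraction of stable manifolds, Theorem~\ref{smtpart}(2), which is if anything more favourable. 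Most importantly, the role of the uniform expansion of $f$ along $W^u$ — which in Proposition~\ref{pressure} guarantees that closeness of two points on the image leaf propagates along the whole length-$n$ orbit segment — is now played by the subexponential contraction of Definition~\ref{secicum}. Abbreviating $I_n:=\int_{f^nW^{cu}_\delta(x)}e^{S_nG(f^{-n}y)}\,d\lambda_{f^nW^{cu}_\delta(x)}(y)$, we must show $\limsup_n\frac1n\log I_n=P(G)$, and as in Lemma~\ref{defpressure} this splits into $P(G)\le\limsup_n\frac1n\log I_n$ (via spanning sets) and $P(G)\ge\limsup_n\frac1n\log I_n$ (via separated sets).

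For the first inequality I would first prove the analogue of Lemma~\ref{cover0}: for every $\epsilon_0>0$ there is an $m$ with $f^mW^{cu}_\delta(x)$ being $\epsilon_0$-dense and $X=\bigcup_{y\in f^mW^{cu}_\delta(x)}W^s_{loc}(y)$. This goes exactly as before — fatten $W^{cu}_\delta(x)$ to an open set, intersect with local stable leaves and use the forward contraction of $W^s$ to bound $d(f^mz_1,f^mz)$ uniformly in $m$, then invoke topological mixing. Given $\epsilon>0$ and $n\ge1$ one covers $f^{n+m}W^{cu}_\delta(x)$ by leafwise balls $B_{d_{cu}}(x_i,r_n)$ and sets $y_i=f^{-n}x_i$. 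The new point is that, since $f$ may contract $W^{cu}$, nearby points on $f^{n+m}W^{cu}_\delta(x)$ may \emph{spread} under $f^{-j}$; by Definition~\ref{secicum}, two points at leafwise distance $r$ on $f^{n+m}W^{cu}_\delta(x)$ stay within ambient distance $g(n)r$ along the whole backward orbit of length $n$. Hence one takes the covering radius shrinking, $r_n\asymp\epsilon/g(n)$, so that both the spanning property and the Bowen-type estimate of Lemma~\ref{bound} hold at the \emph{fixed} scale $\asymp\epsilon$; the usual bookkeeping (disjointness of the $r_n/4$-balls, the change-of-variables identity (1.4), and the estimate (3.3)) then gives
$$Z_0(n,2\epsilon)\ \le\ \frac{1}{M(r_n)}\,e^{n\tau}\,e^{m\|G\|_\infty}\,I_{n+m},$$
where $M(r)=\inf_z\lambda(B_{d_{cu}}(z,r/4))$ over leafwise balls in $f^{n+m}W^{cu}_\delta(x)$. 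Taking $\tfrac1n\log$, then $\limsup_n$, then $\epsilon\to0$ and $\tau\to0$ yields $P(G)\le\limsup_n\frac1n\log I_n$, \emph{provided} $\frac1n\log M(r_n)^{-1}\to0$.

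For the reverse inequality I would avoid shrinking scales altogether. Partition $f^nW^{cu}_\delta(x)$ into leafwise Borel pieces $\{P_i\}$ of diameter $<\epsilon$ (Voronoi cells of a maximal $\epsilon$-separated set), bound $\int_{P_i}e^{S_nG(f^{-n}z)}\,d\lambda\le\lambda(P_i)\,e^{S_nG(f^{-n}z_i)}$ at a maximising point $z_i\in\overline{P_i}$, and pull back to $y_i=f^{-n}z_i$. Because $\|Df|_{E^{cu}}\|$ is bounded, $d_n(y_i,y_j)\ge d(f^{-1}z_i,f^{-1}z_j)\ge c\,d(z_i,z_j)$ for a fixed $c>0$; colouring the pieces with boundedly many colours so that same-coloured $x_i$'s are pairwise $\gtrsim\epsilon$ apart makes each colour class of $\{y_i\}$ an $(n,\kappa\epsilon)$-separated set, whence $I_n\le(\#\text{colours})\cdot\sup_i\lambda(P_i)\cdot Z_1(n,\kappa\epsilon)$ with both front factors bounded independently of $n$. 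As $\kappa\epsilon$ is fixed, taking $\tfrac1n\log$ and $\limsup_n$ gives $\limsup_n\frac1n\log I_n\le\limsup_n\frac1n\log Z_1(n,\kappa\epsilon)\le P(G)$. Note that this half uses only $\|Df|_{E^{cu}}\|<\infty$ and not Definition~\ref{secicum}, consistent with the fact that for the product map of Example~\ref{example} it is precisely the first inequality that fails.

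The main obstacle is the clause "$\frac1n\log M(r_n)^{-1}\to0$", which carries all the weight of the subexponential hypothesis. To discharge it one needs: (i) that a forward-invariant sub-disc of $W^{cu}_\delta(x)$ really stays inside local centre-unstable manifolds under $f^j$ for $0\le j\le n+m$ (using Theorem~\ref{smtpart}(3), possibly after shrinking $\delta$), so that Definition~\ref{secicum} is applicable along the whole orbit segment; and (ii) uniformly bounded geometry of the discs $f^nW^{cu}_\delta(x)$ — in particular a lower bound $\lambda(B_{d_{cu}}(z,r))\ge c\,r^{\dim E^{cu}}$ with $c$ independent of $n$ and $z$ — so that $M(r_n)^{-1}\le c^{-1}(g(n)/\epsilon)^{O(1)}$, which together with $\limsup_n\tfrac1n\log g(n)=0$ gives the required vanishing. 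Establishing (ii) uniformly in $n$ is the one genuinely new technical ingredient (it is what makes the exponential bookkeeping robust against the sub-exponentially shrinking scales); everything else is a routine transcription of the proof of Proposition~\ref{pressure}.
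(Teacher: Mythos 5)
Your proposal reproduces the paper's mechanism almost step for step: shrink the leafwise covering radius to $\epsilon_n=\epsilon/g(n)$; use Definition~\ref{secicum} (via the paper's ``smallest $j$'' bootstrap, which your informal ``stay within $g(n)r$'' statement compresses) to keep the whole backward orbit segment within the fixed scale $\epsilon$ so that Lemma~\ref{bound} still applies; and pay for the shrunken radius with a factor $M(\epsilon_n)^{-1}$, which is subexponential in $n$ because of the volume comparison $\lambda(B_{d_{cu}}(z,\epsilon_n/4))\ge K_1\,g(n)^{-\dim E^{cu}}\lambda(B_{d_{cu}}(z,\epsilon/4))$ --- exactly the role of your item (ii). Where you add something substantive is the separated-set half. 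The paper disposes of it with ``we proceed the same way as in the second half of the proof of Proposition~\ref{pressure}'', but in Proposition~\ref{pressure} the $(n,\kappa\epsilon)$-separation of $y_i=f^{-n}x_i$ was deduced from $f^n$ being distance-\emph{expanding} along $W^u$, and that is precisely what fails along $W^{cu}$. Your replacement --- one backward step together with $\|Df\|_\infty<\infty$ gives $d(f^{n-1}y_i,f^{n-1}y_j)=d(f^{-1}x_i,f^{-1}x_j)\ge\|Df\|_\infty^{-1}d(x_i,x_j)$, and a bounded-multiplicity colouring then turns the leafwise cover into boundedly many $(n,\kappa\epsilon)$-separated families --- is the correct and necessary fix, and your observation that this half uses no subexponential hypothesis at all is exactly consistent with Remark~\ref{counter}, where it is the spanning inequality that breaks. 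Two small points of calibration: your item (i) is phrased in terms of forward-invariance of a sub-disc, but what the argument actually needs is that the backward images $f^{-j}p,f^{-j}q$ of nearby points on $f^{n+m}W^{cu}_\delta(x)$ remain in a common $\delta$-local centre-unstable manifold, which the paper secures by the induction on the first $j$ at which the distance exceeds $\epsilon$; and the bounded-geometry lower bound $\lambda(B_{d_{cu}}(z,r))\ge c\,r^{\dim E^{cu}}$ you flag as the ``one genuinely new ingredient'' is stated (without proof) in the paper as the $K_1$ inequality, so it is indeed a point one should justify, though it is a standard consequence of the uniform transversality and regularity of the $E^{cu}$ discs rather than a new phenomenon.
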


In Proposition \ref{pressure} we require Lyapunov stability to restrict growth in the centre-stable direction so that the centre direction does not contribute to the pressure. 
In the present  case we require that the centre-unstable direction does not contract exponentially, otherwise this 
contributes to the right hand side of (\ref{growtheqpart}).

Consider the centre-unstable geometric potential which is defined as the exponential growth of the derivative of $f$ restricted to the centre-unstable bundle. 


The proof of Proposition \ref{pressure_cu} is similar to the proof of Proposition \ref{pressure}. The overall approach is the same but we have to be more careful constructing the spanning set in the first half of the proof. In particular we no longer cover $f^{n+m}W_\delta^{cu}(x)$ with balls of a fixed radius.

\begin{proof}
We start with the following analogue of Lemma \ref{cover0}.

\begin{lemma}\label{cover_par}
For any $\epsilon > 0$ there exists an $m > 0$ such that $f^mW^{cu}_\delta(x)$ is $\epsilon$-dense in $X$.  In particular, we can assume that
 $X = \cup_{y \in f^mW^{cu}_\delta(x)} W_\epsilon^s(y).$
\end{lemma}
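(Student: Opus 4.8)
The plan is to follow the proof of Lemma \ref{cover0} almost verbatim, replacing the centre-stable manifolds and the Lyapunov stability hypothesis by the stable manifolds and the uniform contraction along them guaranteed by Theorem \ref{smtpart}. The key simplification is that in the splitting $E^{cu}\oplus E^s$ the stable direction contracts at a uniform exponential rate close to $\lambda_1<1$, so no analogue of Lyapunov stability is needed for this particular statement — the subexponential contraction condition of Definition \ref{secicum} will only enter later, in the pressure estimates.

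First I would fix $\epsilon>0$, choose $\epsilon_1>0$ small, and consider the $\epsilon_1$-fattening $W_{\delta,\epsilon_1}^{cu}=\bigcup\{B(y,\epsilon_1) : y\in W_\delta^{cu}(x)\}$, an open subset of $M$ meeting $X$. Next I would use a local product structure for the splitting $E^{cu}\oplus E^s$ — the analogue of Lemma \ref{LPS}, which in this setting follows from Theorem \ref{smtpart} together with the transversality and continuity of the two bundles — to obtain, for $\epsilon_1$ small enough and each $z\in W_{\delta,\epsilon_1}^{cu}\cap X$, a single intersection point $z_1:=W_\delta^{cu}(x)\cap W_{loc}^s(z)$. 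Since $z\in W_{loc}^s(z_1)$ and $f$ contracts distances along stable manifolds by a factor close to $\lambda_1$ (Theorem \ref{smtpart}(2)), we get $d(f^m z_1,f^m z)\to 0$ as $m\to\infty$; shrinking $\epsilon_1$ if necessary, $d(f^m z_1,f^m z)<\epsilon/2$ for every $m\geq 0$. This is precisely the role played by Lyapunov stability in Lemma \ref{cover0}.

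Then I would invoke compactness to fix a finite set $C$ with $\{B(c,\epsilon/4):c\in C\}$ covering $X$, and use topological mixing (applicable since $W_{\delta,\epsilon_1}^{cu}$ is open) to find $m$ so large that $f^m W_{\delta,\epsilon_1}^{cu}\cap B(c,\epsilon/4)\neq\emptyset$ for every $c\in C$. Choosing $y_c$ in this intersection, setting $w:=f^{-m}y_c\in W_{\delta,\epsilon_1}^{cu}$ and letting $w_1\in W_\delta^{cu}(x)$ be its product-structure partner, the previous paragraph gives $d(f^m w_1,y_c)<\epsilon/2$, and a triangle-inequality argument identical to that in Lemma \ref{cover0} shows that for every $y\in X$ there is $z\in W_\delta^{cu}(x)$ with $d(y,f^m z)<\epsilon$; that is, $f^m W_\delta^{cu}(x)$ is $\epsilon$-dense. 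For the ``in particular'' clause I would, after possibly shrinking $\epsilon$ and $\epsilon_1$, apply the local product structure once more: given $y\in X$, pick $z\in W_\delta^{cu}(x)$ with $d(y,f^m z)$ small and note that a neighbourhood of $f^m z$ inside $f^m W_\delta^{cu}(x)$ is an embedded disk tangent to $E^{cu}$ at $f^m z$ (by iterating Theorem \ref{smtpart}(3)), so it may serve as $W_{loc}^{cu}(f^m z)$ in the product structure; then $W_{loc}^{cu}(f^m z)\cap W_{loc}^s(y)$ lies in $f^m W_\delta^{cu}(x)$ and witnesses $y\in W_\epsilon^s$ of a point of $f^m W_\delta^{cu}(x)$.

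The main obstacle I anticipate is the clean statement and use of the local product structure for the $E^{cu}\oplus E^s$ splitting — in particular, ensuring in the last step that the intersection point $W_{loc}^{cu}(f^m z)\cap W_{loc}^s(y)$ genuinely lies on the large iterated manifold $f^m W_\delta^{cu}(x)$ rather than on some abstract local $cu$-leaf; this rests on $f^m W_\delta^{cu}(x)$ being, near each of its points, an embedded $C^1$ disk tangent to $E^{cu}$, which follows from Theorem \ref{smtpart}(3) by iteration. The remaining ingredients — the fattening, the compactness cover, the mixing, and the triangle inequalities — are exactly as in the proof of Lemma \ref{cover0}.
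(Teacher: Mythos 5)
Your proof is correct and is the natural elaboration of the paper's own (very terse) argument, which simply asserts that the lemma ``is a direct consequence of mixing and the local product structure.'' You expand this by adapting the proof of Lemma~\ref{cover0} step by step, correctly observing that the uniform exponential contraction along stable manifolds from Theorem~\ref{smtpart}(2) takes the place of Lyapunov stability and makes the estimate $d(f^m z_1, f^m z)<\epsilon/2$ for all $m$ even easier than in the $E^u\oplus E^{cs}$ case.
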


This is a direct consequence of mixing and the local product structure.

To get a lower bound on the growth rate in Proposition \ref{pressure_cu}, given $\epsilon > 0$ and $n \geq 1$  we want to construct an $(n, 2\epsilon)$-spanning set. The way we construct the spanning set is similar to Proposition \ref{pressure} although here the cover of $f^{n+m}W_\delta^{cu}(x)$ uses balls whose radius depends on $n$.

We begin by choosing a covering of $ f^{n+m} W_\delta^{cu}(x)$ by balls of radius $\epsilon_n = g(n)^{-1}\epsilon < \epsilon$, 
$$
B_{d_{cu}}(x_i,  \epsilon_n) \hbox{ : } i=1, \cdots, N:=N(n+m, \epsilon_n)$$
contained within the centre-unstable manifold with respect to  the induced metric $d_{cu}$ and let  $A_{\epsilon_n} : = f^{n+m} W_\delta^{cu}(x) \setminus \bigcup_{y \in \partial f^{n+m} W_\delta^{cu}(x)} B_{d_{cu}}(y, \epsilon_n/2)$, where $\partial f^{n+m} W_\delta^{cu}(x)$ is the boundary of $f^{n+m} W_\delta^{cu}(x)$. We can choose a maximal set $S= \{x_1, \cdots, x_{N(n+m,\epsilon_n)}\}$ with the additional properties that $d_{cu}(x_i,x_j) > \epsilon_n/2$ for $i \neq j$ and $x_i \in A_{\epsilon_n}$.
By our choice of $S$ we have that 
$$
A_{\epsilon_n} \subset \bigcup_{i=1}^{N(n+m,\epsilon_n)} B_{d_{cu}}(x_i,  \epsilon_n/2).
$$
By the triangle inequality we have that 
$$
f^{n+m} W_\delta^{cu}(x) \subset \bigcup_{i=1}^{N(n+m,\epsilon_n)} B_{d_{cu}}(x_i,  \epsilon_n).
$$
Since 
$B_{d_{cu}}(x_i, \epsilon_n/4)\cap B_{d_{cu}}(x_j, \epsilon_n/4) = \emptyset$ for $i \neq j$
we have that the disjoint union satisfies 
$$
\bigcup_{i=1}^{N(n+m,\epsilon_n)} B_{d_{cu}}(x_i,  \epsilon_n/4) \subset f^{n+m} W_\delta^{cu}(x).
$$

By Lemma \ref{cover_par}, for  any point  $z\in X$  we can choose a point $y \in f^m(W^u_\delta(x))$ with $z \in W^{s}_{\epsilon}(y)$ and $d(y,z) < \epsilon$. Then $d(f^jz, f^jy) < \epsilon$ for $0\leq j \leq n$. By construction, we can then choose an $x_i$ such that $f^ny \in W_\delta^{cu}(x_i)$ and $d(f^ny,x_i) < \epsilon_n =\frac{1}{g(n)}\epsilon$. Additionally, for every $0 \leq j \leq n$, we have  $d(f^{n-j}y,f^{-j}x_i) < \epsilon$. Otherwise, let $j \in \{0,\dots,n-1\}$ be the smallest such that $d(f^{n-j}y,f^{-j}x_i) > \epsilon$. Then $d(f^ny,x_i) < \frac{1}{g(n)}\epsilon < \frac{1}{g(n)}d(f^{n-j}y,f^{-j}x_i) < \frac{1}{g(j)}d(f^{n-j}y,f^{-j}x_i)$ (as $g$ is increasing). Since $f^{n-j}y$ and $f^{-j}x_i$ are in the same piece of centre-unstable manifold then this would contradict the assumption of Definition \ref{secicum}. In particular, letting $y_i = f^{-n}x_i$,  by the triangle inequality 
$$d(f^j z, f^jy_i)  \leq d(f^j z, f^jy)+d(f^j y, f^jy_i) \leq 2\epsilon$$
 for $0\leq j \leq n$. Therefore, $\{y_1, \dots, y_{N(n+m,\epsilon_n)}\}$ is an $(n,2\epsilon)$-spanning set.

It remains to relate $Z_{0,G}(n, 2\epsilon)$ to an integral over 
$f^{n+m}W_\delta^{cu}(x)$. By the properties of our choice of 
$\epsilon_n$-cover for $f^{n+m}W_\delta^{cu}(x)$ and using Lemma \ref{bound}, we  have that for all $n \geq 1$,
\begin{align}  \label{upperpart1}
Z_{0,G}(n,2\epsilon)  &\leq \sum_{i=1}^N  \frac{1}{\lambda_{f^{n+m}W_\delta^{cu}(x)}(B_{d_{cu}}(x_i,\epsilon_n/4))}
\int_{B_{d_{cu}}(x_i,\epsilon_n/4)} e^{S_nG(f^{-n}x_i)} d\lambda_{f^{n+m}W_\delta^{cu}(x)}(z) \cr 
		&\leq \frac{1}{M(n)} e^{n \tau}  \int_{f^{n+m}W_\delta^{cu}(x)}  e^{S_nG(f^{-n}\textcolor{black}{z})} d\lambda_{f^{n+m}W_\delta^{cu}(x)}(\textcolor{black}{z}) \cr
\end{align} 
where $M(n) = M(n,\epsilon_n) = \inf_{z} \lambda_{f^{n+m}W_\delta^{cu}(x)}(B_{d_{cu}}(z,\epsilon_n/4))>0$. For $\epsilon$ small, there is a constant $K_1$ such that $\lambda(B_{d_{cu}}(z,\epsilon_n/4)) \geq K_1 g(n)^{-dim(E^{cu})} \lambda(B_{d_{cu}}(z,\epsilon/4))$. Therefore, by the defining property of $g$, $\limsup_{n \rightarrow \infty} \frac{- \log M(n)}{n} = 0$.
Finally, we can bound 
\begin{equation} \label{upperpart2}
\int_{f^{n+m}W_\delta^{cu}(x)}  e^{S_nG(f^{-n}\textcolor{black}{z})}d\lambda(\textcolor{black}{z})
\leq 
e^{m \|G\|_\infty}
\int_{f^{n+m}W_\delta^{cu}(x)}  e^{S_{n+m}G(f^{-(n+m)}\textcolor{black}{z})}d\lambda(\textcolor{black}{z}).
\end{equation}
Comparing equations (\ref{upperpart1}) and (\ref{upperpart2}), we see that 
\begin{equation*}
\lim_{\epsilon \to  0}
\limsup_{n \to +\infty}
\frac{1}{n} \log 
Z_{0,G}(n, 2\epsilon) \leq 
\limsup_{n \to +\infty}\frac{1}{n} \log \int_{f^{n}W_\delta^{cu}(x)}  e^{S_nG(f^{-n}\textcolor{black}{z})}
d\lambda(\textcolor{black}{z}) +\tau.
\end{equation*}
Since $\tau>0$ can be chosen arbitrarily small the lower bound follows.

To get an upper bound on the growth rate in Proposition \ref{pressure_cu}, we proceed the same way as in the second half of the proof of Proposition \ref{pressure} so we shall omit it.
\end{proof}

\begin{remark}\label{counter}
      In Example \ref{example}, let the centre-unstable direction be given by the span of the eigenvectors, $\lambda_1, \lambda_2,\lambda_3$, where $\lambda_2 = \frac{1}{\lambda_3}$. Therefore, taking $G=0$ again, if Proposition \ref{pressure_cu} applied it would imply the topological entropy should be $\log \lambda_1 + \log \lambda_2 + \log \lambda_3 = \log \lambda_1$. 
      \end{remark}


\subsection{Constructing equilibrium states}

We conclude with a statement about the construction of equilibrium states for partially hyperbolic systems with subexponential contraction in the centre-unstable direction.

\begin{thm}\label{thm:gibbsdiffeo_cu}
Let $f: X \to X$ be a 
 topologically mixing partially hyperbolic attracting diffeomorphism satisfying Definition \ref{secicum} and  let  $G: X \to \mathbb R$ be a continuous function. Given $x\in X$ and $\delta > 0$ small, consider 
the  sequence of probability measures $(\lambda_n)_{n=1}^\infty$ supported on $W^u_\delta(x)$ 
and absolutely continuous with respect to the induced volume $\lambda_{W^u_\delta(x)}$ given by,
$$
 \lambda_n(A) = 
 \frac{\int_{f^{\textcolor{black}{n}}(A)} \exp \left( \sum_{i=1}^{n} G(f^{-i}y)\right) d\lambda_{f^nW_\delta^u}(y)}
 {\int_{f^nW_\delta^u(x)} \exp \left( \sum_{i=1}^{n} G(f^{-i}z)\right) d\lambda_{f^nW_\delta^u}(z)}
 \hbox{  for Borel  $A \subset W_\delta^u(x)$}.
$$
Then the weak* limit points of the averages  
$$\mu_n :=\frac{1}{n}\sum_{k=0}^{n-1} f^k_* \lambda_n, \quad n \geq 1,
$$ are equilibrium states for $G$.
\end{thm}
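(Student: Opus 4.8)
The plan is to run the proof of Theorem \ref{thm:gibbsdiffeo_intro} again, essentially word for word, with the local unstable manifold $W^u_\delta(x)$ replaced throughout by the local centre-unstable manifold $W^{cu}_\delta(x)$ (so the reference measures $\lambda_n$ of the statement are read as living on $W^{cu}_\delta(x)$, consistently with Proposition \ref{pressure_cu}), the geometric potential $\Phi$ replaced by the centre-unstable geometric potential $\Phi^{cu}(x)=-\log|\det(Df|E^{cu}_x)|$, and the pressure--growth identity of Proposition \ref{pressure} replaced by that of Proposition \ref{pressure_cu}. First I would apply Alaoglu's theorem to extract a weak* convergent subsequence $\mu_{n_k}\to\mu$ from the probability measures $\mu_n$ (all supported on $\bigcup_k f^kW^{cu}_\delta(x)\subset X$), and note from the telescoping estimate $|\int F\,d\mu_n-\int F\circ f\,d\mu_n|\le 2\|F\|_\infty/n\to 0$, valid for every continuous $F$, that $\mu$ is $f$-invariant. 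Writing
$$Z_n^G=\int_{f^nW^{cu}_\delta(x)}\exp\Big(\sum_{k=1}^{n}G(f^{-k}y)\Big)\,d\lambda_{f^nW^{cu}_\delta(x)}(y),$$
Proposition \ref{pressure_cu} gives $\limsup_n\tfrac1n\log Z_n^G=P(G)$, and after passing to a further subsequence of $(n_k)$ I may assume this $\limsup$ is attained as a genuine limit along $n_k$.

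Second, fix $\tau>0$ and, by uniform continuity of $G$ (Lemma \ref{bound}), choose a finite partition $\mathcal P$ of sufficiently small size $\epsilon$ so that $\sum_{k=0}^{n-1}G\circ f^k$ varies by at most $n\tau$ on each atom of $\bigvee_{h=0}^{n-1}f^{-h}\mathcal P$, for all $n$. With $K_{n,A}=\int_{f^n(A\cap W^{cu}_\delta(x))}\exp(\sum_{k=1}^nG(f^{-k}y))\,d\lambda_{f^nW^{cu}_\delta(x)}(y)$, the change-of-variables computation and the $\tau$-Bowen property reproduce the inequalities (4.5) and (4.6) verbatim, and adding them gives
$$H_{\lambda_n}\Big(\bigvee_{h=0}^{n-1}f^{-h}\mathcal P\Big)+n\int_X G\,d\mu_n\ \ge\ \log Z_n^G-2n\tau.$$
The only point where genuine geometry (rather than formal manipulation) enters is the bound $\log\lambda_{f^nW^{cu}_\delta(x)}(f^n(A))\le 0$ used in (4.3); this remains valid because an atom $A$ of $\bigvee_{h=0}^{n-1}f^{-h}\mathcal P$ is an $(n,\epsilon)$-Bowen set, so $f^n(A\cap W^{cu}_\delta(x))$ has ambient diameter at most $L\epsilon$ (with $L$ a Lipschitz constant for $f$) and hence induced $cu$-volume $<1$ once $\epsilon$ is small enough.

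Third, I would feed this into Misiurewicz's inequality (Lemma \ref{Misent}): for $0<q<n$,
$$q\log Z_n^G-qn\int_X G\,d\mu_n-2qn\tau\ \le\ qH_{\lambda_n}\Big(\bigvee_{h=0}^{n-1}f^{-h}\mathcal P\Big)\ \le\ nH_{\mu_n}\Big(\bigvee_{i=0}^{q-1}f^{-i}\mathcal P\Big)+2q^2\log\mathrm{Card}(\mathcal P).$$
Dividing by $nq$, letting $n=n_k\to\infty$ (arranging as usual that $\mu(\partial\mathcal P)=0$, so that $H_{\mu_{n_k}}\to H_\mu$ on the fixed finite refinement), and then letting $q\to\infty$, I obtain $P(G)\le h_\mu(\mathcal P)+\int_X G\,d\mu+2\tau\le h(\mu)+\int_X G\,d\mu+2\tau$. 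Since $\tau>0$ is arbitrary and the variational principle gives $h(\mu)+\int_X G\,d\mu\le P(G)$, equality holds and $\mu$ is an equilibrium state for $G$; as every weak* limit point of $(\mu_n)$ arises in this way, the theorem follows.

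I do not expect a serious obstacle here: the hypothesis that genuinely distinguishes this setting — the subexponential contraction of Definition \ref{secicum} — has already been consumed inside Proposition \ref{pressure_cu}, whose proof is where the radius-$g(n)^{-1}\epsilon$ covers of $f^{n+m}W^{cu}_\delta(x)$ were needed. The one point requiring a little care is bookkeeping around the change of variables: in Theorem \ref{thm:gibbsdiffeo_intro} one used the forward invariance of $W^u_\delta(x)$, whereas $W^{cu}_\delta(x)$ is only locally forward invariant (the third property in Theorem \ref{smtpart}); this is harmless because the construction refers only to the images $f^nW^{cu}_\delta(x)$, and $f^n$ restricted to the $cu$-disk is a diffeomorphism onto its image, so the Jacobian identities and the integral form of $\lambda_n$ make sense unchanged.
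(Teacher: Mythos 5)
Your proposal is correct and follows the same route as the paper, which itself simply notes that the proof of Theorem \ref{thm:gibbsdiffeo_cu} is word-for-word that of Theorem \ref{thm:gibbsdiffeo_intro} with $W^u_\delta(x)$ replaced by $W^{cu}_\delta(x)$, Proposition \ref{pressure} replaced by Proposition \ref{pressure_cu}, and the geometric potential replaced by its $cu$-analogue. You also correctly read the $W^u_\delta(x)$ in the theorem statement as the centre-unstable manifold $W^{cu}_\delta(x)$ (consistent with Example \ref{not u-Gibbs}), and your observation that the subexponential-contraction hypothesis has already been spent inside Proposition \ref{pressure_cu} is exactly why the repetition is clean.
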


The proof of Theorem \ref{thm:gibbsdiffeo_cu} is the same as that of Theorem \ref{thm:gibbsdiffeo_intro} so we will not repeat it.
The next example is a construction of equilibrium states for the centre-unstable geometric potential, the exponential growth of the derivative in the centre-unstable direction. Notice that the measures constructed here are equilibrium states but they may no longer be $u$-Gibbs measures as defined in \cite{ps}.

\begin{example} \label{not u-Gibbs}
If we consider the centre-unstable geometric potential $G = \Phi = -  \log \left|\det ( Df|E_x^{cu})\right|$ then the weights are constant and so $\lambda_n = \lambda_{W_\delta^{cu}(x)}$ for all $n \geq 1$. In particular, 
$$
\mu_n = 
\frac{1}{n}\sum_{k=0}^{n-1} f^k_* \lambda_n = \frac{1}{n}\sum_{k=0}^{n-1} f^k_* \lambda, 
$$
for $n \geq 1$. The weak* limit points of $\mu_n$ are equilibrium states for the geometric potential.
\end{example}

\end{document}